\newtheorem{thm}{Theorem}
\newtheorem{cor}[thm]{Corollary}
\newtheorem{lem}[thm]{Lemma}
\newtheorem{defin}[thm]{Definition}
\title{Plaque Inverse Limit of a Dynamical System - Dynamics, Signatures and Local Topology.}
\author{Avraham Goldstein\footnote{CUNY, USA. Email: avraham.goldstein.nyc@gmail.com.}}
\begin{document}
\maketitle

\begin{abstract}
The Plaque Inverse Limit of a branched covering self-map of a Riemann surface was introduced and studied in \cite{CCG}. A point $x$ of P.I.L. was called regular if P.I.L. has the natural Riemann Surface structure at $x$ and was called irregular otherwise. The notion of the signature $sign(x,c)$ of $x$ with respect to a critical point $c$, which was shown to be a local invariant of P.I.L. was introduced and developed. It was shown that $sign(x,c)$ is nontrivial for some critical points $c$ if and only if $x$ is an irregular point. It was shown that the local topology of P.I.L. at an irregular point $x$ has a property, that removing $x$ from any its neighborhood breaks some path-connected component of that neighborhood into an uncountable number of path-connected components. Finally, various signatures, including signatures of the invariant lifts of super-attracting and attracting cycles and certain signatures of the invariant lift of a parabolic cycle, were computed. All these signatures had a maximal element.
\\ \\
In this work we show that the local topology of P.I.L. at irregular points with different types of signatures is different. Namely, we prove that the local topology at an irregular point $x$ has a property, that for any neighborhood $V$ of $x$ and for some point $y\ne x$ in $V$, the open set $V-\{y\}$ consists of uncountable number of path-connected components, if and only if the signature $sign(x,c)$, for some critical point $c$, has no maximal element. Next, for a polynomial functions, we compute the signature of the invariant lift of a parabolic cycle with respect to a certain recurrent critical point. This signature, unlike the cases studied in \cite{CCG}, has no maximal element. We show that all other irregular points, except the invariant lifts of super-attracting, attracting, and parabolic cycles, have no maximal element with respect to some recurrent critical point.
\end{abstract}

{\bf Keywords:} Inverse limit, Riemann surface lamination, holomorphic dynamics, branched covering, local topology, irregular point, signature.

\section{Introduction.}
Inverse limits of iterations of branched self-coverings were introduced and studied in literature since the late 1920s. The most famous classical examples of such inverse limits are the $d$-adic solenoids, which are defined as the inverse limits of the iterates of the $d$-fold covering self-map $f(z)=z^d$ (where $d>1$) of the unit circle $S^1$. These inverse limits are compact, metrizable topological spaces that are connected,
but neither locally connected nor path connected. Solenoids were first introduced by L. Vietoris in 1927 for $d=2$ (see \cite{Vietoris}) and later in 1930 by van Dantzig for an arbitrary $d$ (see \cite{Danzig}).
\\ \\
In 1992 D. Sullivan (see \cite{Sullivan}) introduced Riemann surface laminations, which arise when taking inverse limits in dynamics. A Riemann surface lamination is locally the product of a complex disk and a Cantor set. In particular, D. Sullivan associates such lamination to any smooth, expanding self-maps of the circle $S^1$, with the maps $f(z)=z^d$ being examples of such maps.
\\ \\
In 1997 M. Lyubich and Y. Minsky (see \cite{LM}) and, in parallel, M. Su (see \cite{Su}) formalized the theory of Riemann surface laminations associated with dynamics of rational self-maps of the Riemann sphere. They start by considering the standard (Tychonoff) inverse limit of the iterations of a rational self-map of the Riemann sphere, which are regarded as just iterations of a continuous branched covering self-map of a Hausdorff topological space. Next, they introduce the notion of a regular point $-$ a point of the inverse limit is called regular if the pull-back of some open neighborhood of its first coordinate along that point eventually becomes univalent. Otherwise, a point is called irregular. They call the set of all regular points of the inverse limit, which is the inverse limit with all the irregular points removed, ``the regular set". The Riemann surface lamination, which they associate with a holomorphic dynamical system, in many cases, is just the regular set. In general, certain modifications are performed to the regular set, in order to satisfy the requirement, that the conformal structure on the leaves of the Riemann surface lamination is continuous along the fiber of the lamination. For the details of Lyubich-Minsky's definition and construction of the Riemann surface lamination, which are somewhat elaborate, we refer to \cite{LM}.
\\ \\
In 2014 C. Cabrera, C. Cherif and A. Goldstein (see \cite{CCG}) introduced and studied plaque inverse limits of the iterations of a branched covering self-map of a simply-connected Riemann surface. Plaque inverse limit is the inverse limit in the category of locally-connected Hausdorff topological spaces and continuous open maps. The open neighborhoods of a point in a plaque inverse limit, which constitute a local basis for its topology, are the pull-backs of the open neighborhoods of the first coordinate of this point along the point. The notions of regular and, by complement, irregular points for a plaque inverse limit are defined just like in the Lyubich-Minsky theory. Cabrera, Cherif and Goldstein show that for every irregular point $x$, there exists an open neighborhood $U$, such that for any open neighborhood $V\subset U$ of $x$, deleting $x$ from $V$ breaks some path-connected component of $V$ into an uncountable number of path-connected components. Thus, a plaque inverse limit does not have a manifold structure at the irregular points. Next, they introduce the notion of the signature $\sigma$-lattice, which is a $\sigma$-lattice of totally ordered sets of equivalence classes of binary sequences. With each point $x$ of the plaque inverse limit and each critical point $c$ of $f$ they associate and element $sign(x,c)$ in the signature $\sigma$-lattice, which is called the signature of $x$ with respect to $c$. Cabrera, Cherif and Goldstein proved that $sign(x,c)$ is a local invariant of the plaque inverse limit at $x$, which, for some $c$, becomes nontrivial if and only if $x$ is an irregular point. Next, they construct various irregular points and compute their signatures. They show, that the signatures of the irregular points, which are the invariant lifts of super-attracting and attracting cycles, with respect to every critical point, have maximal elements, while the signatures of all the points of the invariant lift of the boundary of certain Siegel disks, with respect to some critical points, have no maximal elements. They also consider infinitely renormalizable maps with \textit{a priori} bounds, including the case of quadratic map with the Feigenbaum parameter, construct certain irregular point, associated with these maps, and make some computations of its signature.
\\ \\
In this paper we:
\begin{itemize}
 \item Show that the local topology of plaque inverse limit at an irregular point, whose signatures, with respect to every critical point, have maximal elements, differs from the local topology at an irregular point, whose signature, with respect to some critical point, has no maximal element. Namely, we show that the signature of $x$, with respect to some critical point, has no maximal element if and only if there exists an open neighborhood $U$ of $x$, such that for any open neighborhood $V\subset U$ of $x$ there exists a point $v\in V$, different from $x$, such that $V-\{v\}$ consists of an uncountable number of path-connected components.
 \item Study various cases of invariant lifts of parabolic cycles. We show that in some of these cases the signatures of the irregular points, with respect to every critical point, have maximal elements. We show in case in which the signature, with respect to a certain recurrent critical point, has no maximal element. It is not currently known if such a critical point exists. We perform some explicit calculations of signatures for all these cases.
 \item Show all the irregular points, except the invariant lifts of super-attracting, attracting and parabolic cycles, have signatures with no maximal element with respect to some recurrent critical point.
\end{itemize}

\section{Definitions, Notations and Constructions.}
An inverse dynamical system is a sequence: $$S_1\;^{\underleftarrow{\;\;f_1\;\;}} S_2\;^{\underleftarrow{\;\;f_2\;\;}} S_3\;...$$ of Riemann surfaces $S_i$ and holomorphic branched coverings $f_i:S_{i+1}\rightarrow S_i$ where all $S_{i}$ are equal to a given Riemann surface $S_0$ and all $f_{i}$ are equal to a given holomorphic branched covering map $f:S_0\rightarrow S_0$ of degree $d$. In this work we assume that $1<d<\infty$ and $S_0$ is simply-connected $-$ either the unit disk, the complex plane or the Riemann sphere. The critical points of $f:S_0\rightarrow S_0$ are denoted by $c_1,...,c_{k}$. Abusing the notations, we, for all $i$, regard $f$ as a map from $S_{i+1}$ onto $S_{i}$ and regard $c_1,...,c_{k}$ as points of every $S_i$.
\\ \\
The Plaque Inverse Limit [P.I.L.] $S_{\infty}$ of an inverse dynamical system, introduced in \cite{CCG}, is the inverse limit in the category of locally connected topological spaces and continuous open maps. Thus, the underlying set of P.I.L. is the set of all the sequences $x=(x_1\in S_1,x_2\in S_2,...)$ of points, such that $f_{i}(x_{i+1})=x_i$ for $i=1,2,...$. The topology of P.I.L. is the set of all the sequences $U=(U_1\subset S_1,U_2\subset S_2,...)$ of open sets, such that $f_{i}(U_{i+1})=U_i$ for $i=1,2,...$. Finally, P.I.L. comes equipped with canonical projection maps $p_i:S_{\infty}\rightarrow S_i$, satisfying $p_i=f_i\circ p_{i+1}$ for all $i$, where $p_i$ takes $(x_1,x_2,...)\in S_{\infty}$ to $x_i\in S_i$. In this work we will be interested both in the plaque inverse limit $S_{\infty}$ and in its underlying topological space $T_{\infty}$, which comes without the projection maps onto $S_i$.
\\ \\
Recall, that the standard inverse limit $\bar{S}_{\infty}$ of the iterations of $f:S_0\rightarrow S_0$ $-$ the inverse limit inverse limit in the category of topological spaces and continuous maps $-$ has the same underlying set as the P.I.L., but is equipped with the Tychonoff topology. In the Tychonoff topology, the open sets are all sequences $U=(U_1\subset S_1,U_2\subset S_2,...)$ of open sets, where $f_{i}(U_{i+1})=U_i$ for $i=1,2,...$, such that there exists some number $t$, so that $f^{-1}_{i}(U_{i})=U_{i+1}$ for all $i>t$. Thus, P.I.L. has more open sets than the standard inverse limit. To be more precise, the open sets of P.I.L. are all the connected components of all the open sets of the standard inverse limit. The projections maps $p_i$ are the same for both inverse limits.
\\ \\
Recall, that a local basis for the topology of $S_{\infty}$ at a point $x$ consists of all open sets $U$, containing $x$, such that each $U_i$ is conformally equivalent to the unit disk in the complex plane. Each $f_i$, restricted to $U_{i+1}$, is conformally equivalent to some self-map $z^t$ of the unit disk of a degree $t$, between $1$ and $d$. Such open sets $U$ are called plaques. When we speak of a neighborhood of a point in $S_{\infty}$, we always assume it to be a plaque. Similarly, when we speak of a neighborhood of a point in a Riemann surface, we assume it to be simply connected.
\\ \\
Recall, that a point $x\in S_{\infty}$ is called regular if, for some neighborhood $U$ of $x$, there exists $n$, such that $U_{n+i+1}$ contains no critical points of $f_{n+i}$ for all $i=0,1,2,...$. Thus, $f_{n+i}:U_{n+i+1}\rightarrow U_{n+i}$ is a conformal equivalence. Otherwise, the point $x\in S_{\infty}$ is called irregular. Clearly, at a regular point P.I.L. has a natural Riemann Surface structure. It was shown in \cite{CCG} that if $x\in S_{\infty}$ is irregular then exists some open neighborhood $U$ of $x$, such that for any open neighborhood $V\subset U$ of $x$, removing $x$ from $V$ breaks some path-connected component of $V$ into an uncountable number of connected components. Thus, a point $x\in S_{\infty}$ is regular if and only if exists some neighborhood $U$ of $x$ which is topologically homeomorphic to an open disk.
\\ \\
In order to construct local invariants of P.I.L., called signatures, \cite{CCG} introduced the Boolean algebra $I$ of all classes of almost equal binary sequences and the $\sigma$-lattice $A$, spanned by sets $\alpha(a)\subset I$ for all $a\in I$, where $a\in I$, where $\alpha(a)$ is the set of all $b\in I$ such that $b\le a$. To each point $x$ of the P.I.L. and to each critical point $c$ of $f$, a unique element $sign(x,c)$ of $A$ was associated. It was called the signature of $x$ with respect to $c$. It was proved that $x$ is a regular point of P.I.L. if and only if the signature of $x$ with respect to all the critical points of $f$ is trivial.

\begin{defin} We denote the inverse system, associated with the iterations $f:S_0\rightarrow S_0$, by $S$, the plaque inverse limit of $S$ by $S_{\infty}$, and the underlying topological space of $S_{\infty}$ by $T_{\infty}$. \end{defin}

\begin{defin} An open set $U=(U_1,U_2,...)\subset S_{\infty}$ is called a plaque if each $U_i$ is conformally equivalent to the unit disk and $f$, restricted to $U_{i+1}$, is conformally equivalent to a self-map $z^t$ of the unit disk of a degree $t\leq d$.
\end{defin}
In this work, whenever we consider an open neighborhood of a point in a Riemann surface, we assume it to be simply connected. Similarly, whenever we consider an open neighborhood of a point in $S_{\infty}$, we assume it to be a plaque. Open neighborhoods of points in $T_{\infty}$, which we consider, are assumed to be connected.
\\ \\
Recall the following definitions and results from \cite{CCG}:
\\ \\
The set of binary sequences, equipped with the operations $\vee$, $\wedge$, $\neg$, defined by performing the binary operations \textbf{or}, \textbf{and}, \textbf{not}, respectively, in each coordinate of the sequences, is a Boolean algebra. Its partial order $\le$ is defined by $b\le a$ if and only if $a\vee b=a$. Its minimal and maximal elements are $(0,0,0,...)$ and $(1,1,1,...)$, respectively. Two binary sequences are called almost equal if they differ only in a finite number of places. This ``almost equality" is an equivalence relation, which respects the $\vee$, $\wedge$, $\neg$ operations, the partial order $\le$, and the minimal and maximal elements. Thus:

\begin{defin} The quotient  $I$ of the Boolean algebra of binary sequences by the ``almost equality" equivalence relation is the Boolean algebra of all classes of almost equal binary sequences, equipped with the $\vee$, $\wedge$, $\neg$ operations and the partial order $\le$. Its minimal element is $\textbf{0}=[0,0,...]$ and maximal element is $\textbf{1}=[1,1,...]$.\end{defin}

\begin{defin}
For every $a\in I$, the $\alpha(a)\subset I$ is defined as the set of all $b\in I$ such that $b\le a$.
\end{defin}
Note that $\alpha(a)\cup \alpha(b)\subset \alpha(a\vee b)$, $\alpha(a\wedge b)=\alpha(a)\cap \alpha(b)$, $\alpha(\textbf{0})=\{\textbf{0}\}$, and $\alpha(\textbf{1})=I$.

\begin{defin} \label{categ}
The $\sigma$-lattice $A$, spanned by all $\alpha(a)$, where $a\in I$, with the operations $\cup$ and $\cap$, the minimal
element $\{\textbf{0}\}$, and the maximal element $I$, is called the signature $\sigma$-lattice. The elements of $A$ are called signatures.
\end{defin}
Notice, that $\subset$ defines a partial order on $A$, which is consistent with the partial order $\leq$ of $I$ under the map
$\alpha$.

\begin{defin} The map $shift_m:I\rightarrow I$, for any integer $m$, takes each class $[i]\in I$ to the class of the binary sequence, obtained from $i$ by adjoining $m$ initial $0$ entries to it if $m\ge 0$ or by deleting $m$ initial entries from it if $m<0$. The map $shift_m:I\rightarrow I$ induced the map $shift_m:A\rightarrow A$
\end{defin}
Notice, that $shift_0=Id_I$ and $shift_m\circ shift_{-m}=Id_I$ for all $m$.
\\ \\
The following theorem and its corollary, which are Theorem 10 and Corollary 11 in \cite{CCG}, are crucial for defining signatures $sign(x,c)$ of points $x\in S_{\infty}$ and for distinguishing between signatures $sign(x,c)$ with and without maximal element:

\begin{thm}\label{thm.stab} Let $[i_1],[i_2],[i_3],...$ and $[t_1],[t_2],[t_3],...$ be elements of $I$. If $$\alpha[i_1]\cup\alpha[i_2]\cup\alpha[i_3]\cup ...=\beta=\alpha[t_1]\cap\alpha[t_2]\cap\alpha[t_3]\cap...$$ for some $\beta\in A$, then there exist some natural numbers $m$ and $n$ such that $$\alpha[i_1]\cup...\cup\alpha[i_m]=\beta=\alpha[t_1]\cap...\cap\alpha[t_n].$$ So,
$[i_1]\vee...\vee[i_m]=[t_1]\wedge...\wedge[t_n]$ and $\beta=\alpha([i_1]\vee...\vee[i_m])$.
\end{thm}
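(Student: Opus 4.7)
My plan is to identify $I$ with $\mathcal{P}(\mathbb{N})/\mathrm{fin}$ via indicator functions, pick representative sets $I_j, T_j \subset \mathbb{N}$ for $[i_j]$ and $[t_j]$, and form the honest set unions and intersections $A_m = I_1 \cup \cdots \cup I_m$ and $B_n = T_1 \cap \cdots \cap T_n$. This yields an increasing family $A_1 \subset A_2 \subset \cdots$ and a decreasing family $B_1 \supset B_2 \supset \cdots$ of subsets of $\mathbb{N}$ with $[A_m] = [i_1] \vee \cdots \vee [i_m]$ and $[B_n] = [t_1] \wedge \cdots \wedge [t_n]$ in $I$. By the identity $\alpha(a \wedge b) = \alpha(a) \cap \alpha(b)$ recorded in the paper, the intersection $\alpha[t_1] \cap \cdots \cap \alpha[t_n]$ equals $\alpha([B_n])$, while for the unions one only has the inclusion $\alpha[i_1] \cup \cdots \cup \alpha[i_m] \subset \alpha([A_m])$.

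First I would upgrade the hypothesis to $\bigcup_m \alpha([A_m]) = \beta = \bigcap_n \alpha([B_n])$. Since $\beta$ is an intersection of downsets it is itself a downset, and $[i_j] \in \beta$ forces $[i_j] \leq [B_n]$ for every $j,n$, so $[A_m] = \bigvee_{j \leq m} [i_j] \leq [B_n]$ as well. This puts every $[A_m]$ in $\beta$, hence $\alpha([A_m]) \subset \beta$, and yields the key arithmetic input that $A_m \setminus B_n$ is finite for every $m$ and $n$.

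Next I would show that the decreasing chain $\alpha([B_n])$ stabilizes. Suppose it does not; after passing to a subsequence I may assume $\alpha([B_n])$ is strictly decreasing, so each $B_n \setminus B_{n+1}$ is infinite. Since $A_n \setminus B_{n+1}$ is finite, the set $A_n \cap (B_n \setminus B_{n+1})$ is finite, so $(B_n \setminus B_{n+1}) \setminus A_n$ is infinite; pick $x_n$ in it and form $X = \{x_n : n \geq 1\}$. For every $n$, $x_k \in B_k \subset B_n$ whenever $k \geq n$, so $X \setminus B_n \subset \{x_1, \ldots, x_{n-1}\}$ is finite and $[X] \leq [B_n]$, placing $[X]$ in $\beta$. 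For every $m$, $x_k \notin A_k \supset A_m$ whenever $k \geq m$, so $X \setminus A_m$ is infinite and $[X] \not\leq [A_m]$, putting $[X]$ outside $\bigcup_m \alpha([A_m]) = \beta$, a contradiction.

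Once $\alpha([B_N]) = \beta$ is secured for some $N$, the union side is immediate: $[B_N] \in \beta = \bigcup_m \alpha([A_m])$ gives $[B_N] \leq [A_{m_0}]$ for some $m_0$, and combined with $[A_M] \leq [B_N]$ (since $[A_M] \in \beta$) this forces $[A_M] = [B_N]$ for all $M \geq m_0$. The identities $[i_1] \vee \cdots \vee [i_M] = [t_1] \wedge \cdots \wedge [t_N]$ and $\beta = \alpha([i_1] \vee \cdots \vee [i_M])$ then follow from injectivity of $\alpha$ (from $a \in \alpha(a)$, the equality $\alpha(a) = \alpha(b)$ forces $a = b$). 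The hard part will be the construction of the witness $X$: the delicate point is choosing each $x_n$ so that it lies in $B_n \setminus B_{n+1}$ and simultaneously avoids $A_n$, which is exactly what the upgraded input that $A_n \setminus B_{n+1}$ is finite makes possible.
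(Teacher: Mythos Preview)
The present paper does not actually prove this theorem; it is quoted without proof as Theorem 10 of \cite{CCG}, so there is no in-paper argument to compare against. That said, your proof is correct and is the natural interpolation argument in $\mathcal{P}(\mathbb{N})/\mathrm{fin}$: the upgrade to $\bigcup_m \alpha([A_m]) = \beta = \bigcap_n \alpha([B_n])$ is justified exactly as you say (each $\alpha([A_m])$ sits inside the downset $\beta$ because $[A_m]\le[B_n]$ for all $n$), and the diagonal witness $X=\{x_n\}$ works since the $x_n$ are automatically distinct (the sets $B_n\setminus B_{n+1}$ are pairwise disjoint), giving $[X]\le[B_n]$ for every $n$ but $[X]\not\le[A_m]$ for any $m$.

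One small remark on the very last step. You conclude $\beta=\alpha([A_M])=\alpha([i_1]\vee\cdots\vee[i_M])$, which is the substantive content, but the theorem as stated also asserts the literal equality $\alpha[i_1]\cup\cdots\cup\alpha[i_m]=\beta$ for the finite \emph{union} (not just for $\alpha$ of the join). To get this, run your final step with the original hypothesis rather than the upgraded one: from $[B_N]\in\beta=\bigcup_j\alpha[i_j]$ you obtain $[B_N]\le[i_{j_0}]$ for a single index $j_0$, and then $\beta=\alpha([B_N])\subset\alpha[i_{j_0}]\subset\alpha[i_1]\cup\cdots\cup\alpha[i_{j_0}]\subset\beta$. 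This also shows, incidentally, that in this situation the finite union is already a single $\alpha[i_{j_0}]$.
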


\begin{cor}\label{cor.finite} If $\alpha[i_1]\cap\alpha[i_2]\cap\alpha[i_3]\cap ...=\alpha[i]$ for some $[i]\in I$ then there exists a finite number $n$ such that $[i_1]\wedge...\wedge[i_n]=[i]$.
\end{cor}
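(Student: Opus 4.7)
The plan is to derive the corollary as a direct consequence of Theorem~\ref{thm.stab}, using only its ``intersection half'' together with the injectivity of the map $\alpha:I\to A$.

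First I would set $\beta=\alpha[i]$ and produce an auxiliary countable union representation of $\beta$ so that Theorem~\ref{thm.stab} applies. The simplest choice is the constant sequence $[u_1]=[u_2]=\cdots=[i]$, giving $\alpha[u_1]\cup\alpha[u_2]\cup\cdots=\alpha[i]=\beta$ trivially. Combined with the given hypothesis $\alpha[i_1]\cap\alpha[i_2]\cap\cdots=\alpha[i]$, both sides satisfy the assumptions of Theorem~\ref{thm.stab}. The theorem then yields natural numbers $m$ and $n$ with
\[
\alpha[u_1]\cup\cdots\cup\alpha[u_m]\;=\;\beta\;=\;\alpha[i_1]\cap\cdots\cap\alpha[i_n];
\]
only the intersection side is relevant here, namely $\alpha[i_1]\cap\cdots\cap\alpha[i_n]=\alpha[i]$.

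Next I would use the identity $\alpha(a\wedge b)=\alpha(a)\cap\alpha(b)$, noted in the excerpt immediately after the definition of $\alpha$, and iterate it to rewrite the finite intersection as $\alpha([i_1]\wedge\cdots\wedge[i_n])=\alpha[i]$. Finally, I would observe that $\alpha$ is injective on $I$: whenever $\alpha(a)=\alpha(b)$, the element $a$ lies in $\alpha(a)=\alpha(b)$, forcing $a\le b$, and by symmetry $b\le a$, so $a=b$. Applying this to $a=[i_1]\wedge\cdots\wedge[i_n]$ and $b=[i]$ delivers the desired conclusion $[i_1]\wedge\cdots\wedge[i_n]=[i]$.

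The substantive content lies entirely in Theorem~\ref{thm.stab}, which is assumed; the corollary itself presents no real obstacle and amounts to a formal repackaging. The only point worth a brief justification in a careful write-up is the invocation of Theorem~\ref{thm.stab} with a trivial union side, handled by the constant sequence $[u_k]=[i]$.
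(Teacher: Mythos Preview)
Your proposal is correct and matches the paper's treatment: the paper simply records this statement as Corollary~11 of \cite{CCG}, i.e., as an immediate consequence of Theorem~\ref{thm.stab}, without supplying a separate argument. Your device of feeding Theorem~\ref{thm.stab} the constant union sequence $[u_k]=[i]$, extracting the finite intersection, and then using $\alpha(a\wedge b)=\alpha(a)\cap\alpha(b)$ together with the injectivity of $\alpha$ is exactly the natural unpacking of that ``corollary'' status.
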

Recall from \cite{CCG} the following Definitions and Lemmas, describing index, signature and shift operation:
\begin{defin} For an open neighborhood $U\subset S_{\infty}$ and a critical point $c\in S_0$, the index $ind(U,c)\in I$ of $U$ with respect to $c$ is the equivalence class of the binary sequence, which has 1 in its $n^{th}$ place if and only if $c\in U_n$.  \end{defin}
Notice, that if $V\subset U$, then $ind(V,c)\le ind(U,c)$.
\begin{defin} \label{oldwork.definsignature} For a point $x\in S_{\infty}$ and a critical point $c\in S_0$ the signature $sign(x,c)$ of $x$ with respect to $c$ is defined as $$sign(x,c)=\bigcap_{j=1}^{\infty}\alpha([ind(U(j),c)]),$$ where $(U(1),U(2),...)$ is an arbitrary sequence of open neighborhoods of $x$ in $S_{\infty}$ shrinking to $x$. \end{defin}
\begin{defin} For every integer $m$, the map $shift_m:I\rightarrow I$ takes each class $[i]\in I$ to the class of the binary sequence, obtained from $i$ by adjoining $m$ initial $0$ entries to it if $m\ge 0$ or by deleting $m$ initial entries from it if $m<0$.
\end{defin}
Clearly, $shift_0=Id_I$ and $shift_m\circ shift_{-m}=Id_I$ for all $m$.
\begin{lem} The maps $shift_m:I\rightarrow I$ induce maps $shift_m:A\rightarrow A$. Again, $shift_0=Id_A$ and $shift_m\circ shift_{-m}=Id_A$.
\end{lem}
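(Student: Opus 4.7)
The plan is to define the induced map on $A$ element-wise: for each subset $\beta\subseteq I$ belonging to $A$, set $shift_m(\beta):=\{\,shift_m(b):b\in\beta\,\}$. All claims of the lemma will then reduce to the corresponding statements on $I$, and I can sidestep any worry about an element of $A$ admitting several expressions as a Boolean combination of $\alpha(a)$'s.

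First I would verify that $shift_m:I\to I$ is a Boolean automorphism with inverse $shift_{-m}$. The coordinatewise operations $\vee,\wedge,\neg$, together with the compositions $shift_m\circ shift_{-m}$ and $shift_{-m}\circ shift_m$, alter only finitely many initial entries of any representative sequence, and such alterations are invisible modulo almost-equality. In particular $shift_m$ preserves the order $\le$ and is a bijection on $I$.

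Next I would check the identity $shift_m(\alpha(a))=\alpha(shift_m(a))$. The inclusion $\subseteq$ is immediate from order-preservation, since $b\le a$ implies $shift_m(b)\le shift_m(a)$. For $\supseteq$, given $c\le shift_m(a)$, applying $shift_{-m}$ yields $shift_{-m}(c)\le a$, and $shift_m(shift_{-m}(c))=c$, so $c\in shift_m(\alpha(a))$.

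Finally, because $shift_m$ is a bijection on $I$, its element-wise action commutes with arbitrary set-theoretic unions and intersections of subsets of $I$ (injectivity being the point that makes the intersection statement work). Since $A$ is the $\sigma$-lattice generated by all $\alpha(a)$, and every generator is sent into $A$ by the previous step, a routine transfinite induction on the construction of elements of $A$ via countable $\cup$ and $\cap$ shows that $shift_m$ maps $A$ into $A$. The equalities $shift_0=Id_A$ and $shift_m\circ shift_{-m}=Id_A$ then follow at once from the corresponding identities on $I$, both sides being defined element-wise. There is no real obstacle; the only pitfall worth naming is the temptation to define $shift_m$ on $A$ by induction on generator expressions and then have to prove independence of representation, a complication the element-wise definition avoids entirely.
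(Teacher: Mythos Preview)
Your proof is correct. The paper itself does not supply a proof of this lemma; it is merely recalled from \cite{CCG} without argument, so there is nothing to compare against. Your element-wise definition of $shift_m$ on subsets of $I$, together with the verification that $shift_m$ is an order-preserving bijection of $I$ with inverse $shift_{-m}$ and the identity $shift_m(\alpha(a))=\alpha(shift_m(a))$, is exactly the natural route; closure of $A$ under $shift_m$ then follows, as you say, from commutation of a bijection with arbitrary unions and intersections, and the two identities are immediate.
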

\begin{lem} \label{oldwork.signatureinvariant} The signature $sign(x,c)$ does not depend on the choice of the sequence $(U(1),U(2),...)$. \end{lem}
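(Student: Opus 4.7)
The plan is to exploit two monotonicity facts already recorded in the excerpt: first, that the local basis of plaques at $x$ is cofinal, so any two shrinking sequences of plaque neighborhoods are mutually interleaved; second, that the index is monotone under inclusion, i.e.\ $V\subset U$ forces $ind(V,c)\le ind(U,c)$, and hence $\alpha([ind(V,c)])\subset \alpha([ind(U,c)])$.

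More concretely, I would proceed as follows. Let $(U(j))_{j\ge 1}$ and $(V(j))_{j\ge 1}$ be two sequences of open neighborhoods of $x$ in $S_{\infty}$, each shrinking to $x$. The phrase ``shrinking to $x$'' means that every open neighborhood of $x$ contains some member of the sequence; equivalently, both sequences are cofinal bases at $x$. Fix any index $j$. Since $(V(k))_k$ shrinks to $x$, there exists $k_j$ with $V(k_j)\subset U(j)$. By the monotonicity note following the definition of $ind(\cdot,c)$, this gives $[ind(V(k_j),c)]\le [ind(U(j),c)]$ in $I$, so $\alpha([ind(V(k_j),c)])\subset \alpha([ind(U(j),c)])$. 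Consequently,
\[
\bigcap_{k=1}^{\infty}\alpha([ind(V(k),c)])\;\subset\;\alpha([ind(V(k_j),c)])\;\subset\;\alpha([ind(U(j),c)]).
\]
Since $j$ was arbitrary, intersecting over all $j$ yields
\[
\bigcap_{k=1}^{\infty}\alpha([ind(V(k),c)])\;\subset\;\bigcap_{j=1}^{\infty}\alpha([ind(U(j),c)]).
\]
The symmetric argument, swapping the roles of the two sequences, gives the reverse inclusion, so the two intersections coincide, proving the signature is independent of the defining sequence.

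There is essentially no obstacle here beyond unfolding the definitions; the one point that deserves a line of care is that the ``shrinking to $x$'' hypothesis is used in the form of mutual cofinality, and that plaque neighborhoods are closed under the operation of passing to a smaller plaque inside a given neighborhood of $x$, so the indices $k_j$ genuinely exist. Once those are granted, the argument is purely a $\sigma$-lattice sandwich using monotonicity of $\alpha$, and no appeal to Theorem \ref{thm.stab} or Corollary \ref{cor.finite} is required.
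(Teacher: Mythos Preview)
Your argument is correct: the mutual cofinality of two shrinking sequences together with the monotonicity $V\subset U\Rightarrow \alpha([ind(V,c)])\subset\alpha([ind(U,c)])$ gives both inclusions by a straightforward sandwich, exactly as you wrote. Note, however, that the present paper does not actually prove this lemma; it is one of several results explicitly \emph{recalled} from \cite{CCG} without proof, so there is no in-paper argument to compare against. Your cofinality proof is the natural and standard one, and is presumably what \cite{CCG} does as well.
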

\begin{lem} \label{oldwork.signatureregular} A point $x\in S_{\infty}$ is regular if and only if $sign(x,c)=\{[0,0,0,...]\}$ for every critical point $c$ of $f$. \end{lem}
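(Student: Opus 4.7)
The plan is to prove both implications directly from Definition \ref{oldwork.definsignature}, using Lemma \ref{oldwork.signatureinvariant} to pick a convenient nested shrinking sequence of plaque neighborhoods and the inequality $ind(V,c)\le ind(U,c)$ whenever $V\subset U$. A useful preliminary observation is that for a nested sequence $U(1)\supset U(2)\supset\cdots$, the wedge $ind(U(1),c)\wedge\cdots\wedge ind(U(j),c)$ coincides coordinatewise with $ind(U(j),c)$, since a $1$ in the $n^{th}$ coordinate of the smallest term forces a $1$ in the same coordinate of every earlier term.

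For the forward direction, suppose $x$ is regular and let $U$ be a plaque neighborhood witnessing regularity: there is $n$ such that $U_{n+i+1}$ contains no critical points of $f$ for every $i\ge 0$. Then for any critical point $c$ the binary sequence defining $ind(U,c)$ has only finitely many $1$'s, so $[ind(U,c)]=\mathbf{0}$. Choosing any shrinking sequence $(U(j))$ of plaque neighborhoods of $x$ contained in $U$, the nesting inequality gives $[ind(U(j),c)]\le\mathbf{0}$, hence $\alpha([ind(U(j),c)])=\{\mathbf{0}\}$ for every $j$, and therefore $sign(x,c)=\{\mathbf{0}\}$.

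For the reverse direction, assume $sign(x,c)=\{\mathbf{0}\}$ for every critical point, and fix a nested shrinking sequence $(U(j))$ of plaque neighborhoods. Applying Corollary \ref{cor.finite} separately for each of the finitely many critical points $c_1,\dots,c_k$ yields natural numbers $n_1,\dots,n_k$ with $[ind(U(1),c_j)]\wedge\cdots\wedge[ind(U(n_j),c_j)]=\mathbf{0}$. By the coordinatewise wedge observation this simplifies to $[ind(U(n_j),c_j)]=\mathbf{0}$, so $c_j\in U(n_j)_m$ for only finitely many levels $m$. Setting $N=\max_j n_j$ and $V=U(N)$, the single neighborhood $V$ avoids every $c_j$ except at finitely many levels; picking $n$ larger than all these exceptional levels then shows $V_{n+i+1}$ contains no critical points for $i\ge 0$, witnessing regularity of $x$.

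The main obstacle is the reverse direction, where one must convert an infinite intersection collapsing to $\{\mathbf{0}\}$ into a concrete neighborhood of finite index; this conversion is precisely the content of Corollary \ref{cor.finite}, combined with the finiteness of the critical set, which allows the $k$ separately obtained levels to be amalgamated into a single one.
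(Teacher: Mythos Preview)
Your argument is correct. The paper does not actually supply a proof of this lemma: it is listed among the results ``recalled from \cite{CCG}'' and is simply quoted without argument, so there is no in-paper proof to compare against. Your forward direction is immediate from the definitions, and your reverse direction correctly invokes Corollary~\ref{cor.finite} (applied with $[i]=\mathbf{0}$) together with the finiteness of the critical set to extract a single plaque neighborhood witnessing regularity; the nestedness observation that $\bigwedge_{t\le j}[ind(U(t),c)]=[ind(U(j),c)]$ is exactly what is needed to pass from the abstract finite wedge to a concrete level of the shrinking sequence.
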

\begin{lem}\label{lemma.signatures} For any $x,x'\in S_{\infty}$, if $sign(x,c)\cap sign(x',c)$ contains any element other than $[0,0,0,...]$, then $x=x'$. \end{lem}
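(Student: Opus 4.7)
The plan is to argue by contradiction using the Hausdorff structure of the Riemann surfaces $S_n$ and the compatibility of plaques with the maps $f$. Suppose $[e]\in sign(x,c)\cap sign(x',c)$ with $[e]\neq\textbf{0}$, and let $K=\{n:e_n=1\}$, which is an infinite subset of the natural numbers. The membership $[e]\in sign(x,c)$ unpacks, via Definition~\ref{oldwork.definsignature}, to the assertion that $[e]\le[ind(U,c)]$ for every plaque neighborhood $U$ of $x$; equivalently, for every such $U$, the critical point $c$ lies in $U_n$ for all but finitely many $n\in K$. The same holds at $x'$. So the aim is to produce neighborhoods $U$ of $x$ and $V$ of $x'$ whose coordinates at cofinitely many $n\in K$ are forced to be simultaneously disjoint, which together with the above will force a contradiction unless $x=x'$.

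Assuming for contradiction that $x\neq x'$, pick the smallest index $n_0$ with $x_{n_0}\neq x'_{n_0}$; since $S_{n_0}$ is Hausdorff, separate these points by disjoint open neighborhoods $A_{n_0}\ni x_{n_0}$ and $A'_{n_0}\ni x'_{n_0}$ in $S_{n_0}$. The first technical step is to produce a plaque $U$ around $x$ (and similarly $V$ around $x'$) whose $n_0$-th coordinate sits inside $A_{n_0}$ (respectively $A'_{n_0}$). This is done by invoking continuity of the projection $p_{n_0}:S_\infty\to S_{n_0}$: the set $p_{n_0}^{-1}(A_{n_0})$ is an open neighborhood of $x$, and because plaques form a local basis we may pick a plaque $U\subset p_{n_0}^{-1}(A_{n_0})$ containing $x$, ensuring $U_{n_0}\subset A_{n_0}$; similarly for $V$.

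Next I would propagate disjointness to every level $m\ge n_0$. By the definition of the plaque topology, $f_{n_0}\circ\cdots\circ f_{m-1}$ maps $U_m$ onto $U_{n_0}$ and $V_m$ onto $V_{n_0}$; hence $U_m\subset f^{-(m-n_0)}(U_{n_0})$ and $V_m\subset f^{-(m-n_0)}(V_{n_0})$. Since preimages of disjoint sets are disjoint and $U_{n_0}\cap V_{n_0}\subset A_{n_0}\cap A'_{n_0}=\emptyset$, it follows that $U_m\cap V_m=\emptyset$ for all $m\ge n_0$. In particular $c$ cannot belong to both $U_m$ and $V_m$ for any $m\ge n_0$.

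Finally I would derive the contradiction. Because $[e]\le[ind(U,c)]$ and $[e]\le[ind(V,c)]$, the set $\{n\in K:c\in U_n\}$ and $\{n\in K:c\in V_n\}$ are each cofinite in $K$, so their intersection is also cofinite in $K$ and, being infinite, contains some $m\ge n_0$. At that $m$ the critical point $c$ lies in the empty set $U_m\cap V_m$, which is impossible. Hence no such $n_0$ exists, and $x=x'$. I do not anticipate a substantial obstacle: the only subtle step is justifying that one can cut the plaque neighborhood down to sit inside a chosen open set at the $n_0$-th coordinate, which is immediate from continuity of $p_{n_0}$ and the plaque basis; the rest is just preimage bookkeeping and the definition of the index.
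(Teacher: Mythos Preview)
Your argument is correct. The key steps---unpacking $[e]\in sign(x,c)$ as $[e]\le ind(U,c)$ for every plaque neighborhood $U$ of $x$, separating $x_{n_0}$ and $x'_{n_0}$ by Hausdorffness, propagating disjointness to all higher levels via $U_m\subset f^{-(m-n_0)}(U_{n_0})$, and then forcing $c$ into $U_m\cap V_m=\emptyset$ for some $m\ge n_0$---are all sound. The one point you flagged as subtle (finding a plaque whose $n_0$-th coordinate lies inside a prescribed open set) is indeed routine: either use continuity of $p_{n_0}$ together with the plaque local basis, or simply shrink $U_1$ so that the connected component of $f^{-(n_0-1)}(U_1)$ through $x_{n_0}$ lands inside $A_{n_0}$, which works because $f^{n_0-1}$ is a finite branched covering.

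As for comparison: the present paper does not prove this lemma at all---it is listed among the results \emph{recalled} from \cite{CCG} (see the block beginning ``Recall from \cite{CCG} the following Definitions and Lemmas\ldots''), so there is no in-paper proof to set yours against. Your proof is the natural one and is almost certainly what the original argument in \cite{CCG} amounts to.
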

\begin{lem} \label{oldwork.signatureperiod.lem} For any integer $m$  and any point $x\in S_{\infty}$, we have:
$$sign(f^m(x),c)=shift_{-m}(sign(x,c)).$$\end{lem}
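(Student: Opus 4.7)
Plan: unwind the definition of the signature and exploit the coordinate-shifting action of $f^m$ on $S_{\infty}$. By Lemma \ref{oldwork.signatureinvariant}, fix any sequence of plaque neighborhoods $U(1)\supset U(2)\supset \ldots$ of $x$ shrinking to $x$, so that
\[ sign(x,c)=\bigcap_{j=1}^{\infty}\alpha([ind(U(j),c)]). \]
Since $f$ lifts to a continuous open self-map of $S_{\infty}$ whose (branch) inverse is likewise continuous in the plaque topology, the sequence $f^m(U(1))\supset f^m(U(2))\supset \ldots$ consists of plaque neighborhoods of $f^m(x)$ shrinking to $f^m(x)$, and Lemma \ref{oldwork.signatureinvariant} applies again to compute $sign(f^m(x),c)$ from this sequence.

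The heart of the argument is the identity
\[ [ind(f^m(U(j)),c)]=shift_{-m}([ind(U(j),c)]). \]
This follows by tracing coordinates: the $n$-th coordinate of $f^m(U(j))$ coincides, up to a finite initial discrepancy, with the $(n+m)$-th coordinate of $U(j)$. Consequently the binary sequence recording membership of $c$ in the coordinates of $f^m(U(j))$ differs from the analogous sequence for $U(j)$ by an $m$-fold index shift, and after passing to classes under almost equality this is precisely $shift_{-m}$.

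Taking the intersection over $j$ and using that $shift_{-m}\colon A\to A$ is, by construction, induced from $shift_{-m}\colon I\to I$ on generators $\alpha(\cdot)$ and therefore commutes with countable intersections of such generators, I obtain
\[ sign(f^m(x),c)=\bigcap_{j}\alpha(shift_{-m}([ind(U(j),c)]))=shift_{-m}\Bigl(\bigcap_{j}\alpha([ind(U(j),c)])\Bigr)=shift_{-m}(sign(x,c)). \]

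The main obstacle is the bookkeeping in the displayed shift identity: the coordinate action of $f^m$ must be shown to correspond to $shift_{-m}$ (rather than $shift_{m}$) under the paper's convention for the lift of $f$ to $S_{\infty}$, and the cases $m>0$ and $m<0$ must be handled uniformly, with the opposite coordinate shift used for negative $m$. Once the coordinate action is pinned down, the remainder is routine; Theorem \ref{thm.stab} is available as a fallback, reducing any delicate question about the infinite intersection to a finite one on which commutation with $shift_{-m}$ is elementary, since the shift preserves $\wedge$ on $I$.
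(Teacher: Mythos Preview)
The paper does not actually prove this lemma: it is one of the results explicitly recalled from \cite{CCG} without proof, so there is no ``paper's own proof'' to compare against. Your argument is therefore being judged on its own merits, and it is essentially sound. The two ingredients you isolate are the right ones: (i) the lift of $f$ to $S_{\infty}$ is a homeomorphism (with inverse the left-shift $(x_1,x_2,\ldots)\mapsto(x_2,x_3,\ldots)$), so $f^m(U(j))$ is again a shrinking basis of plaques at $f^m(x)$ and Lemma \ref{oldwork.signatureinvariant} applies; and (ii) since $shift_{-m}:I\to I$ is a bijection by the relation $shift_m\circ shift_{-m}=Id_I$, it sends $\alpha(a)$ to $\alpha(shift_{-m}(a))$ and commutes with arbitrary intersections of subsets of $I$, so passing it through the countable intersection defining $sign$ is legitimate without any appeal to Theorem \ref{thm.stab}.

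One remark on the sign convention you flag: under the natural lift $f(x_1,x_2,\ldots)=(f(x_1),x_1,x_2,\ldots)$ one has $f^m(U)_n=U_{n-m}$ for $n>m$, so the membership sequence for $c$ in $f^m(U)$ is the sequence for $U$ with $m$ entries \emph{prepended}; by Definition of $shift_m$ (adjoin $m$ initial entries for $m\ge0$) this reads $[ind(f^m(U),c)]=shift_{m}([ind(U,c)])$, not $shift_{-m}$. The formula in the lemma as stated therefore presumes the opposite convention for the lifted $f$ (or an opposite sign convention for $shift$) from \cite{CCG}; this is exactly the bookkeeping discrepancy you anticipated, and it does not affect the mathematical content of your argument.
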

The signature $sign(x,c)$ has a maximal element if exists some $a\in I$ such that $sign(x,c)=\alpha(a)$. Corollary \ref{cor.finite} implies, that $sign(x,c)$ has a maximal element if and only if exists some neighborhood $U\subset S_{\infty}$ of $x$ such that $sign(x,c)=ind(U,c)$.
\section{Signatures and local topology at irregular points}
The crucial technique, which permits us to characterize different local topologies at points with different types of signatures, is the interplay between local connectivity and local path-connectivity. This technique has already been introduced in utilized in \cite{CCG} and used to produce Theorem 15 there.  Here we further develop this technique. This permits us to make much sharper distinctions between various local topologies of points of P.I.L.
\\ \\
Two technical lemmas, which made this technique possible, were Lemmas 13 and 14 in \cite{CCG}. Due to their importance, we reproduce them, without proofs, here as Lemmas \ref{lemma.path} and \ref{lemma.neighborhood}. Next, we state Theorem 15 of \cite{CCG}, in a somewhat stronger form, as our Theorem \ref{irreg.topol.old}. Next, in Lemma \ref{Tychonoff} we make an important observation, which, together with technical Lemmas \ref{lemma.neighborhood.nomax.1}, \ref{lemma.neighborhood.nomax.2} and \ref{lemma.neighborhood.nomax.3}, permits us to enhance and refine the technique of juxtaposing local connectivity and local path-connectivity. Namely, we observe, that if a sequence of points of $S_{\infty}$ converges in Tychonoff topology, but does not converge in the plaque topology, then only finitely many of the points of that sequence can be contained in any compact set and, in particular, connected by a path in $S_{\infty}$. This enhanced and refined technique now distinguishes the local topology at the irregular points, which have a signature with no maximal element, from the local topology at the irregular points, which do not have such a signature.
\\ \\
Let $X$ be a regular, first-countable topological space and $z$ be a point in $X$.
\begin{defin} A sequence of open neighborhoods $(U(1),U(2),...)$ of $z$ shrinks to $z$ if $U(i+1)\subset U(i)$ for all $i$ and for any open neighborhood $V$ of $z$ there exists some $m$ such that $U(m)\subset V$. Thus, the set $\{U(1),U(2),...\}$ is a local base for the topology at $z$.
\end{defin}
\begin{defin} Given a sequence  $sq=(z(1),z(2),z(3),...)$ of points in $X$. A path $p:[0,1]\rightarrow X$ passes through the $sq$ in the correct way if there exist some numbers $0\leq t_1\leq t_2\leq ...\leq 1$ such that $z(m)=p(t_m)$ for all $m=1,2,...$.
\end{defin}
\begin{lem}\label{lemma.path} There exists a path $p:[0,1]\rightarrow X$ which passes through $sq$ in the correct way if and only if $sq$ converges to some point $z\in X$ and $$p(\lim_{m\rightarrow\infty}t_m)=z.$$
\end{lem}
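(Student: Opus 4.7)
The plan is to prove the two directions of the biconditional separately. The ``only if'' direction should follow essentially from the continuity of the path, while the ``if'' direction requires a careful construction exploiting the local path-connectivity implicit in the setting (plaques in $S_{\infty}$ are path-connected, and similarly for the ambient topological space under consideration here).

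For the forward direction, suppose $p:[0,1]\rightarrow X$ and $0\leq t_1\leq t_2\leq \cdots\leq 1$ witness that $p$ passes through $sq$ correctly. The sequence $(t_m)$ is bounded above by $1$ and monotonically non-decreasing, so it converges to some $t^*=\lim_{m\rightarrow\infty}t_m\in[0,1]$. By continuity of $p$ we have $p(t_m)\rightarrow p(t^*)$, and since $z(m)=p(t_m)$ this immediately gives $sq\rightarrow p(t^*)$. Setting $z=p(t^*)$ yields both required conclusions.

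For the reverse direction, assume $sq$ converges to some $z\in X$. First-countability at $z$ supplies a shrinking local base of path-connected open neighborhoods $V_1\supset V_2\supset\cdots$ of $z$ (in our applications, the plaques themselves form such a base). Pick any strictly increasing sequence $t_1<t_2<\cdots$ in $[0,1)$ with $t^*=\lim_{m\rightarrow\infty}t_m$. Convergence of $sq$ furnishes indices $M_1<M_2<\cdots$ such that $z(m)\in V_n$ whenever $m\geq M_n$. For each $m$, choose a continuous path $q_m:[t_m,t_{m+1}]\rightarrow X$ from $z(m)$ to $z(m+1)$, taking care that when $M_n\leq m$ we choose $q_m$ with image contained in $V_n$, which is possible because $V_n$ is path-connected and contains both endpoints. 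Define $p(t)=q_m(t)$ on each $[t_m,t_{m+1}]$, extend by $p(t^*)=z$, and define $p$ arbitrarily by a single path on $[0,t_1]$ (from any chosen base point to $z(1)$) and on $[t^*,1]$ (constantly $z$, say).

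Continuity on $[0,t^*)$ follows from the pasting lemma applied to the finitely many pieces on any compact subinterval $[0,t_m]$, and continuity at $t^*$ is exactly where the neighborhood bookkeeping pays off: for any open $W\ni z$, pick $n$ with $V_n\subset W$; then for $t\in[t_{M_n},t^*]$ we have $p(t)\in V_n\subset W$, so $p$ is continuous at $t^*$. The main obstacle is precisely this continuity verification at the limit parameter: without forcing the connecting paths to lie inside a shrinking neighborhood base around $z$, nothing prevents the constructed $p$ from oscillating wildly and failing to be continuous at $t^*$. Once that is handled, the identity $p(\lim_{m\rightarrow\infty}t_m)=p(t^*)=z$ is built into the construction, completing the proof.
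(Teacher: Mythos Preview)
The paper does not actually prove this lemma; it explicitly reproduces Lemmas~\ref{lemma.path} and~\ref{lemma.neighborhood} from \cite{CCG} \emph{without proofs}, so there is no argument here to compare yours against.

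On its own terms your proof is sound. The forward direction is exactly right: monotonicity plus boundedness gives a limit $t^*$, and continuity of $p$ forces $z(m)=p(t_m)\to p(t^*)$. For the reverse direction you correctly recognize that what is really needed is a shrinking local base of \emph{path-connected} neighborhoods of $z$, and you then concatenate connecting arcs chosen to lie in successively smaller members of that base; the continuity check at $t^*$ is handled cleanly. Do note, though, that regularity and first-countability alone do not furnish such a base---the reverse implication is false, for instance, in the Cantor set---so your parenthetical remark that ``in our applications, the plaques themselves form such a base'' is doing genuine work: in $S_\infty$ plaques are path-connected and constitute a local base, so the extra hypothesis is available precisely where the lemma is applied. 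One cosmetic point: rather than connecting ``any chosen base point'' to $z(1)$ on $[0,t_1]$, it is tidier to set $p$ constantly equal to $z(1)$ there, so that no global path-connectivity assumption sneaks in.
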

\begin{lem}\label{lemma.neighborhood} For every irregular point $x\in S_{\infty}$, there exists some open neighborhood $U$ of $x$ such that for any open neighborhood $V\subset U$ of $x$, there are infinitely many positive integers $n(1)<n(2)<...$ for which $V_{n(i)}$ contains some critical points of $f_{n(i)-1}$ while $(U-V)_{n(i)}$ does not contain any critical points of $f_{n(i)-1}$.
\end{lem}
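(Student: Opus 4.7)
The strategy is to combine the signature characterization of irregularity (Lemma \ref{oldwork.signatureregular}) with the finiteness of the critical set $\{c_1,\ldots,c_k\}$ of $f$. By Lemma \ref{oldwork.signatureregular}, since $x$ is irregular, there is at least one critical point $c$ with $sign(x,c)\ne\{\mathbf{0}\}$; unwinding Definition \ref{oldwork.definsignature} shows this nontriviality is equivalent to the following statement: for every plaque neighborhood $W$ of $x$ the index set $\{n:c\in W_n\}$ is infinite.

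I would construct $U$ as follows. Partition the critical points of $f$ into the persistent set $P=\{c_\ell:sign(x,c_\ell)\ne\{\mathbf{0}\}\}$, which contains $c$, and the transient set $T=\{c_1,\ldots,c_k\}\setminus P$. For each $c'\in T$, triviality of $sign(x,c')$ supplies a plaque neighborhood $W_{c'}$ of $x$ at which $c'\in (W_{c'})_n$ holds for only finitely many $n$. Since $T$ is finite and plaques form a basis of the topology, I can choose $U$ to be a plaque contained in $\bigcap_{c'\in T}W_{c'}$. For this $U$, apart from a fixed finite set of exceptional levels, every critical point appearing in $U_n$ belongs to $P$.

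Given any plaque $V\subset U$, the set $N_V:=\{n:c\in V_n\}$ is infinite by the nontriviality of $sign(x,c)$, so each $n\in N_V$ already witnesses the first required condition, namely that $V_n$ contains the critical point $c$. To secure the second condition, that $(U-V)_n$ contains no critical points for infinitely many $n\in N_V$, it suffices to show that on infinitely many $n\in N_V$ every critical point of $U_n$ already lies in $V_n$. If $|P|=1$ this is immediate, since the only possible critical point of $U_n$ (at non-exceptional levels) is $c\in V_n$. If $|P|>1$, I would pigeonhole on the finite power set $2^P$ to extract an infinite set of levels at which $U_n\cap\{c_1,\ldots,c_k\}$ equals some fixed subset $Q\subseteq P$ containing $c$, and then further refine $U$ so that on an infinite subsequence of those levels every element of $Q$ is captured inside $V_n$.

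The principal obstacle is this last refinement when $|P|>1$. Its resolution rests on the basic shrinking property of the plaque basis, $\bigcap_jW^{(j)}_n=\{x_n\}$ for each fixed $n$, combined with the finiteness of $P$: these force any two persistent critical points that repeatedly coexist inside $U_n$ to do so at levels where $x_n$ is close to both, and, once $U$ has been shrunk sufficiently, only at levels where $x_n$ itself equals a critical point of the packet $Q$; at such levels the inclusion $c\in V_n\subset U_n$ automatically places all of $Q$ into $V_n$, yielding the required infinite sequence $n(1)<n(2)<\cdots$.
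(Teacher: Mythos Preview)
Note first that the paper does not actually prove this lemma: it is reproduced from \cite{CCG} (as Lemma~14 there) explicitly ``without proofs''. So there is no argument in the present paper to compare against, and your sketch has to stand on its own.

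Your setup is sound. Splitting the critical set into the persistent part $P$ and the transient part $T$, and shrinking $U$ so that the transient points occur in only finitely many levels, is the right first move; your treatment of the case $|P|=1$ is also correct.

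The gap is in the last paragraph, the $|P|>1$ case. You propose to ``further refine $U$'' so that whenever several persistent critical points coexist in $U_n$, this happens only at levels where $x_n$ itself is one of them, and then claim that at such levels all of $Q$ lands inside $V_n$. Neither step is justified. For the first: for each \emph{fixed} $n$ one has $\bigcap_jW^{(j)}_n=\{x_n\}$, but the set of levels at which two distinct persistent points $c,c'$ coexist in $U_n$ can shift as $U$ shrinks, so nothing forces those levels to satisfy $x_n\in\{c,c'\}$ for any single choice of $U$. For the second: even if $x_n=c$, a sufficiently small $V$ will have $V_n$ a tiny disk around $x_n$ that need not contain the other point $c'\in Q$. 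More structurally, the order of quantifiers in the lemma is $\exists U\,\forall V$, so you cannot refine $U$ after $V$ is handed to you, and you cannot anticipate which $V$ will be chosen when fixing $U$.

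The clean argument is the descending-chain contradiction that the paper itself uses later in the proof of Lemma~\ref{lemma.neighborhood.nomax.1}. Assume the lemma fails. Then for every neighborhood $U(1)$ of $x$ there is a ``bad'' $V=:U(2)\subset U(1)$ such that for all but finitely many $n$ with a critical point in $U(2)_n$, the annulus $(U(1)-U(2))_n$ also contains a critical point. Iterate to get $U(1)\supset U(2)\supset\cdots\supset U(k+1)$. By irregularity there are infinitely many $n$ with a critical point in $U(k+1)_n$; for such $n$ large enough (past all the finitely many exceptions), each annulus $(U(j)-U(j+1))_n$, $j=1,\dots,k$, contains a critical point. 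These annuli are pairwise disjoint, so $f$ would have at least $k$ distinct critical points, contradicting the finiteness of the critical set once $k$ is large. This avoids entirely the attempt to push all of $Q$ into $V_n$.
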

\begin{thm}\label{irreg.topol.old} The signature of $x\in S_{\infty}$ with respect to some critical point $c$ is nontrivial if and only if there exists an open neighborhood $U\subset T_{\infty}$ of $x$ such that for any neighborhood $W\subset U$ of $x$, deleting $x$ from $W$ breaks the path-connected component of $W$, containing $x$, into an uncountable number of path-connected components.
\end{thm}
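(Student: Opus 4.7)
The plan splits into two directions. For the reverse direction I argue by contraposition: if every $sign(x,c)$ is trivial, Lemma \ref{oldwork.signatureregular} makes $x$ regular, so $x$ has a plaque neighborhood homeomorphic to an open disk; any sub-neighborhood $W$ then has $W\setminus\{x\}$ a punctured disk---path-connected---so the stated local topology condition cannot hold.

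For the forward direction, assume some $sign(x,c)$ is nontrivial, making $x$ irregular. I take $U$ to be the plaque neighborhood supplied by Lemma \ref{lemma.neighborhood}: for every plaque $W\subset U$ of $x$ there are infinitely many indices $n(1)<n(2)<\cdots$ at which the plaque map $f:W_{n(i)}\to W_{n(i)-1}$ has some degree $t_i\ge 2$ while no additional critical points appear in $U\setminus W$ at these levels. Fix such a $W$, pick a base point $z\in W_1\setminus\{x_1\}$, and for every $\alpha\in\{0,1\}^{\mathbb{N}}$ construct a point $y_\alpha\in W$ by lifting $z$ through the tower---uniquely at non-critical levels, and at the $i$-th critical level choosing between two designated preimages according to $\alpha_i$. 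This yields an uncountable family of distinct points in $W$.

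Two claims remain: (a) each $y_\alpha$ lies in the path-component of $W$ containing $x$, and (b) if $[\alpha]\neq[\beta]$ in $I$, then $y_\alpha$ and $y_\beta$ lie in distinct path-components of $W\setminus\{x\}$. For (a), I would construct a plaque-continuous path from $y_\alpha$ to $x$ by contracting $z$ radially to $x_1$ via $s\mapsto(1-s)z$ and lifting consistently through the disk models $w\mapsto w^{t}$; the level-$i$ modulus of the lifted path and the level-$i$ radius of any canonical plaque basis neighborhood of $x$ both scale as $(\cdot)^{1/\prod_{j<i}t_j}$, so the containment condition reduces to a uniform-in-$i$ inequality on the first coordinate, and plaque-continuity at the endpoint $s=1$ follows from Lemma \ref{lemma.path} applied to a shrinking sequence of plaque basis neighborhoods of $x$.

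Claim (b) is the principal obstacle. Given a hypothetical plaque-continuous path $p$ in $W\setminus\{x\}$ from $y_\alpha$ to $y_\beta$, I track the winding classes $K_i\in\mathbb{Z}$ of the projections $p_{n(i)}$ around $x_{n(i)}$ (relative to the branch endpoints). The branched-cover structure forces the recursion $K_i=t_i K_{i+1}+c_i$, where $c_i\in\{0,\ldots,t_i-1\}$ records the branch discrepancy at critical level $n(i)$ and is zero precisely when $\alpha_i=\beta_i$. For this recursion to have integer solutions at every level starting from a finite $K_1$---equivalently, for $K_1$ to possess a finite mixed-radix expansion with digits $(c_i)$---the sequence $(c_i)$ must vanish past some index, forcing $\alpha$ and $\beta$ to be almost equal, contrary to $[\alpha]\neq[\beta]$. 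The delicate subcase, when a projection $p_i$ passes through $x_i$ at some time while $p$ still avoids the product point $x$, is handled via the paper's forthcoming Lemma \ref{Tychonoff}: a sequence converging in the Tychonoff topology but not in the plaque topology cannot lie on a single plaque-continuous path. This prevents the path from ``escaping'' the winding analysis by shuttling indefinitely into deeper levels, and ensures the recursion is valid at sufficiently deep levels regardless of a few shallow passages through some $x_i$. Since $\{0,1\}^{\mathbb{N}}$ modulo almost-equality has cardinality $2^{\aleph_0}$, uncountably many path-components of $W\setminus\{x\}$ result, all inside the path-component of $W$ containing $x$.
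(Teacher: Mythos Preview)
Your reverse direction matches the paper's. For the forward direction, however, the paper does not give an independent argument: it simply observes that Theorem~15 of \cite{CCG}, together with the remark that the neighborhood $U$ in Lemma~14 of \cite{CCG} can be shrunk and renamed $W$, already yields the conclusion, and then invokes Lemma~\ref{oldwork.signatureregular} to pass between ``irregular'' and ``some signature nontrivial''. Your proposal instead reconstructs the underlying \cite{CCG} argument from scratch---the binary tree of lifts $y_\alpha$, the radial path to $x$, and the winding-number obstruction to joining $y_\alpha$ and $y_\beta$ in $W\setminus\{x\}$. This is a legitimate and more self-contained route, and is presumably close in spirit to what \cite{CCG} actually does; what it buys you is independence from the external reference, at the cost of reproducing a nontrivial construction.

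Two technical points in your reconstruction deserve tightening. First, Lemma~\ref{lemma.neighborhood} does not assert that levels outside $\{n(i)\}$ are free of critical points in $W$; it only controls the levels $n(i)$. So ``lifting uniquely at non-critical levels'' is not quite right---you should instead fix an arbitrary but $\alpha$-independent lift at any additional critical levels and reserve the $\alpha$-dependent binary choice for the levels $n(i)$. Second, your recursion $K_i = t_i K_{i+1} + c_i$ conflates the degree $t_i$ of the single map $f:W_{n(i)}\to W_{n(i)-1}$ with the composite degree of $f^{n(i+1)-n(i)}:W_{n(i+1)}\to W_{n(i)}$; it is the latter that governs the transformation between $K_i$ and $K_{i+1}$. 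Since that composite degree is still at least $2$, the mixed-radix obstruction survives and the argument goes through. Your appeal to the forthcoming Lemma~\ref{Tychonoff} to ensure $p_n$ eventually avoids $x_n$ is correct and logically independent of the present theorem.
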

\begin{proof}
The proof of Theorem 15 in \cite{CCG}, actually, establishes that for any irregular point $x$ there exists a neighborhood $U$ of $x$, such that deleting $x$ from any open neighborhood $W\subset U$ of $x$ breaks the path-connected component of $W$, containing $x$, into an uncountable number of path-connected components. To see this, notice, that the neighborhood $U$ in Lemma 14 of \cite{CCG} can always be taken smaller and renamed to $W$. But, due to Lemma \ref{oldwork.signatureregular}, $x$ is irregular if and only if the signature of $x$ with respect to some critical point $c$ is nontrivial.
\\ \\
For the other direction of the theorem, notice, that every regular point has a neighborhood, which is homeomorphic to an open unit disk.
\end{proof}
\begin{lem}\label{Tychonoff} Let $v=(v_1,v_2,...)$ be a point in $S_{\infty}$ and let $sq=(w(1),w(2),...)$ be a sequence of points in $S_{\infty}$ such that for some sequence of positive integers $(m(1)<m(2)<...)$, for every positive integer $n$ and for all $m\ge m(n)$ we have $w(m)_n=v_n$. In other words, for every $n$, all the entries of $sq$, after the initial $m(n)$ entries, have their first $n$ coordinates the same as $v$. Then, if $sq$ has a converging subsequence in $S_{\infty}$, the limit of that subsequence must be $v$.
\end{lem}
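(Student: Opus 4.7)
The plan is to exploit the fact that the plaque topology on $S_\infty$ is strictly finer than the Tychonoff topology on $\bar S_\infty$, while the Tychonoff topology is Hausdorff. The hypothesis of the lemma, read literally, states that $sq$ converges to $v$ coordinate-wise, which is precisely Tychonoff convergence; so any putative plaque-limit of a subsequence must coincide with $v$ by uniqueness of Hausdorff limits.

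First I would note that the plaque topology refines the Tychonoff topology: every Tychonoff open set is a sequence $(U_1,U_2,\ldots)$ with $f_i(U_{i+1})=U_i$ for all $i$ and with $U_{i+1}=f_i^{-1}(U_i)$ past some index, so it automatically satisfies the weaker defining condition of a plaque open set. Therefore any plaque-convergent sequence is Tychonoff-convergent to the same limit. Next I would verify that the hypothesis yields $w(m)\to v$ in the Tychonoff topology: a basic Tychonoff neighborhood of $v$ is determined by a simply connected neighborhood $\Omega\ni v_n$ in $S_n$ for some $n$, together with its forward images under $f$ and its eventual full pull-backs, and $w(m)$ lies in this neighborhood if and only if $w(m)_n\in\Omega$. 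By hypothesis $w(m)_n=v_n\in\Omega$ for all $m\ge m(n)$, so $w(m)\to v$ in Tychonoff.

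Combining these observations, suppose some subsequence $(w(m_k))$ converges in the plaque topology to a point $z\in S_\infty$. Then it also Tychonoff-converges to $z$, while by the previous step it Tychonoff-converges to $v$. Since $\bar S_\infty$ embeds into the product of the Hausdorff Riemann surfaces $S_i$ and is therefore itself Hausdorff, Tychonoff limits are unique, forcing $z=v$. There is no real obstacle here; the only subtlety is verifying that coordinate-stabilization is exactly Tychonoff convergence, which is handled as above.
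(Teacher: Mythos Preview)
Your proof is correct and follows essentially the same approach as the paper's: the paper argues directly that plaque convergence implies coordinate-wise convergence (continuity of the projections $p_n$), and then uses Hausdorffness of each $S_n$ to force $v'_n=v_n$; your version packages this as ``plaque refines Tychonoff'' plus ``Tychonoff is Hausdorff,'' which is the same argument expressed at a slightly higher level of abstraction.
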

\begin{proof}
Let $v'=(v'_1,v'_2,...)$ be the limit of a subsequence $sq'=(w(j(1)),w(j(2)),...)$ of $sq$. For all $n$, the sequence $(w(j(1))_n,w(j(2))_n,...)$ of points in $S_n$ converges to $v'_n\in S_n$. But all $w(j(i))_n$, with $j(i)\ge m(n)$, are equal to $v_n$. Since $S_n$ is Hausdorff, this implies that $v'_n=v_n$.
\end{proof}
The following lemma is related to Lemma \label{lemma.neighborhood} and should be viewed as its extension:
\begin{lem}\label{lemma.neighborhood.nomax.1} For every point $x\in S_{\infty}$ such that for some critical point $c$, the signature $sign(x,c)$ has no maximal element, there exists an open neighborhood $U$ of $x$ in $S_{\infty}$, such that for any open neighborhood $V\subset U$ of $x$ there exists an open neighborhood $W$ of $x$, with its closure $\bar{W}$ contained inside $V$, and infinitely many positive integers $n(1)<n(2)<...$, so that each $(V-\bar{W})_{n(i)}$ contains the critical point $c$, while all the $(U-V)_{n(i)}$ do not contain any critical points of $f$.
\end{lem}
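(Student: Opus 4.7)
The plan is to combine the absence of a maximal element of $sign(x,c)$ with the elementary plaque observation that, for $n \ge 2$, every level $U_n$ of a plaque contains at most one critical point of $f$. The first ingredient will let me drop $c$ from $V_n$ at infinitely many levels by passing to a smaller neighborhood; the second will, for free, force $(U-V)_{n(i)}$ to contain no critical points of $f$, regardless of how $V$ was chosen inside $U$.

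I would take $U$ to be any plaque neighborhood of $x$. Given an arbitrary plaque $V \subset U$ of $x$, I build inside $V$ a shrinking sequence of plaque neighborhoods $V \supset W(1) \supset W(2) \supset \cdots$ of $x$ in which $\overline{W(j+1)_n}$ is compactly contained in $W(j)_n$ for every $n$. To do so, take $W(j+1)_1$ to be a compactly contained concentric sub-disk of $W(j)_1$ and inductively set $W(j+1)_{n+1}$ to be the component of $f^{-1}(W(j+1)_n)$ containing $x_{n+1}$. Properness of $f$ then transfers compact containment from each level to the next: $\overline{W(j+1)_{n+1}}$ is closed, contained in the compact set $f^{-1}(\overline{W(j+1)_n}) \subset f^{-1}(W(j)_n)$, is connected, and contains $x_{n+1}$, so it lies inside the component $W(j)_{n+1}$.

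Since $sign(x,c)$ has no maximal element and $\bigcap_{j} \alpha([ind(W(j),c)]) = sign(x,c)$, Corollary \ref{cor.finite} forbids any eventual constancy of the decreasing sequence $[ind(W(j),c)] \in I$, so some index $j_0$ satisfies $[ind(W(j_0),c)] < [ind(V,c)]$ strictly in $I$. Unpacking this strict inequality in the Boolean algebra of binary sequences shows that the set $N = \{n : c \in V_n \text{ and } c \notin W(j_0)_n\}$ is infinite. Setting $W := W(j_0+1)$, the compact containment yields $\overline{W_n} \subset W(j_0)_n$, hence $c \notin \overline{W_n}$, and therefore $c \in (V - \overline{W})_n$ for every $n \in N$.

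The second conclusion then follows from the plaque observation: for every $n \ge 2$, the definition of a plaque makes $f \colon U_n \to U_{n-1}$ conformally equivalent to $z^t$ on the unit disk, so $U_n$ contains at most one critical point of $f$. When $n \in N$, the inclusion $c \in V_n \subset U_n$ forces this unique candidate to be $c$ itself, and then $c \in V_n$ already implies that $U_n - V_n$ contains no critical points of $f$. Discarding the single level $n = 1$ (a finite exception), the infinite subset $N \cap \{n \ge 2\}$ supplies the required $n(1) < n(2) < \cdots$. I expect the main technical point to be the shrinking construction in the second paragraph, which rests on a careful interplay of continuity, connectedness, and properness of $f$; once that is in place, the no-maximal-element hypothesis and the plaque structure cooperate essentially automatically to yield both conclusions.
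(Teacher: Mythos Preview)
Your proof is correct, and for the second conclusion it is genuinely simpler than the paper's.

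For the first conclusion (producing $W$ with $\bar W\subset V$ and infinitely many levels where $c\in (V-\bar W)_n$), both you and the paper argue the same way: if no such $W$ existed, then $ind(W',c)=ind(V,c)$ for all small $W'$, forcing $sign(x,c)=\alpha(ind(V,c))$ and contradicting the no-maximal-element hypothesis. Your use of a compactly nested tower $W(j)$ to secure $\bar W\subset V$ at the end is tidy; just make sure the first-level radii actually tend to $0$, so that $(W(j))$ is a local base at $x$ and Lemma~\ref{oldwork.signatureinvariant} gives $\bigcap_j\alpha([ind(W(j),c)])=sign(x,c)$. (Incidentally, you do not really need Corollary~\ref{cor.finite} here: eventual constancy of $[ind(W(j),c)]$ would directly make that constant the maximal element.)

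For the second conclusion the approaches diverge. The paper does \emph{not} use your plaque observation. Instead it argues by contradiction: assuming no $U$ works, it builds a descending chain $U(1)\supset U(2)\supset\cdots$ in which, for some common level $m$, each annulus $(U(j)-U(j+1))_m$ must contain a critical point; since these annuli are disjoint and $f$ has only finitely many critical points, a contradiction ensues once the chain is longer than the number of critical points. Your route is shorter and more structural: because $f:U_n\to U_{n-1}$ is conformally $z^t$, each level $U_n$ with $n\ge 2$ carries at most one critical point of $f$, so once $c\in V_n$ there simply are no critical points left to lie in $U_n-V_n$. This makes the choice of $U$ completely free (any plaque works), whereas the paper's argument produces $U$ only indirectly through the contradiction. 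The paper's argument, on the other hand, would survive in a setting where plaque levels might contain several critical points; it uses only that $f$ has finitely many, not that at most one can sit in a given $U_n$.
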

\begin{proof}
First, notice, that for any open neighborhood $V\subset S_{\infty}$ of $x$ we can always find some open neighborhood $W$ of $x$, with its closure
$\bar{W}$ contained in $V$, such that there are infinitely many positive integers $m(1)<m(2)<...$, for which the level $(V-\bar{W})_{m(i)}$ contains the critical point $c$. Indeed, if for all open neighborhoods $W$ of $x$, satisfying $\bar{W}\subset V$, the set $V-\bar{W}$ contains $c$ only in a finite number of its levels $(V-\bar{W})_n$, then for all open neighborhoods $W'\subset V$ of $x$, the set $(U-W')$ also contains $c$ only in a finite number of its levels $(U-W')_n$. That follows from the fact, that for any open neighborhood $W'$ of $x$ there exists an open neighborhood $W$ of $x$, whose closure $\bar{W}$ is contained inside $W'$. But, $(V-W')$ containing $c$ only in a finite number of its levels would imply that $ind(W',c)=ind(V,c)$. Since this would be true for all open neighborhoods $W'\subset V$ of $x$, we get that $sign(x,c)=\alpha(ind(V,c))$. But this would contradict our requirement that $sign(x,c)$ does not have a maximal element. So, for any neighborhood $V$ of $x$ there must exist infinitely many positive integers $m(1)<m(2)<...$, for which the level $(V-\bar{W})_{m(i)}$ contains the critical point $c$.
\\ \\
Next, if this lemma is false, then for any open neighborhood $U(1)$ of $x$ we can find some open neighborhood $V\subset U(1)$ of $x$, such that for any open neighborhood $W$ of $x$, whose closure $\bar{W}$ is contained inside $V$, and for any infinite sequence $sq=(n(1),n(2),...)$ of increasing positive integers, for which the levels $(V-\bar{W})_{n(i)}$ contain $c$, almost all the levels  $(U(1)-V)_{n(i)}$ of the set $U(1)-V$, except finitely many of them, contain some other critical points of $f$. Denote this $V$ by $U(2)$.
\\ \\
Now, for the open neighborhood $U(2)$ of $x$ we can find some open neighborhood $V\subset U(2)$ of $x$, such that for any open neighborhood $W$ of $x$, whose closure $\bar{W}$ is contained inside $V$, and for any infinite sequence $sq=(n(1),n(2),...)$ of increasing positive integers, for which the levels $(V-\bar{W})_{n(i)}$ contain $c$, almost all the levels $(U(2)-V)_{n(i)}$ of the set $U(2)-V$, except finitely many of them, contain some other critical points of $f$. Denote this $V$ by $U(2)$.
\\ \\
Proceed this way to define $U(3),U(4),...$.
\\ \\
So, for any positive integer $q$, the open neighborhoods $U(q)$ and $U(q+1)$ of $x$ have the property, that for any open neighborhood $W$ of $x$, whose closure $\bar{W}$ is contained inside $U(q+1)$, and for any infinite sequence $sq=(n(1),n(2),...)$ of increasing positive integers, for which the levels $(U(q+1)-\bar{W})_{n(i)}$ contain $c$, almost all the levels $(U(q)-U(q+1))_{n(i)}$ of the set $U(q)-U(q+1)$, except finitely many of them, contain some other critical points of $f$.
\\ \\
Now, fix some positive integer $q$, some open neighborhood $W$ of $x$ and some infinite sequence $sq=(n(1),n(2),...)$ of increasing positive integers, for which the set $U(q+1)-\bar{W}$ contains $c$ in all of its levels $(U(q+1)-\bar{W})_{n(i)}$. Clearly, for any integer $j$ between $1$ and $q$, the set $U(j+1)-\bar{W}$ also contains $c$ in all of its levels $(U(j+1)-\bar{W})_{n(i)}$, where $i=1,2,3,...$, while almost all the levels $(U(j)-U(j+1))_{n(i)}$ of the set $U(j)-U(j+1)$, except finitely many of them, contain some other critical points of $f$. Hence, for some positive integer $m$, all the open sets $(U(1)-U(2))_m$, $(U(2)-U(3))_m$, ..., $(U(q)-U(q+1))_m$ will contain some critical points of $f$. Since these sets are pairwise disjoint, all these critical points must be different.
\\ \\
But $f$ has only finitely many critical points, while we can fix $q$ as large as we want. This leads to a contradiction. Thus, the lemma cannot be false.
\end{proof}
\begin{lem}\label{lemma.neighborhood.nomax.2} The open neighborhood $W$ and the increasing integers $n(1),n(2),...$ in Lemma \ref{lemma.neighborhood.nomax.1} can be selected in such a way, that the sequence $(f^{n(1)-1}(c),f^{n(2)-1}(c),...)$ of points in $(V-\bar{W})_1=V_1-\bar{W}_{1}$ converges to some point $v_1\in V_1-\bar{W}_{1}$.
\end{lem}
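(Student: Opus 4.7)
The plan is a two-step shrinking argument. Given an arbitrary plaque $V\subset U$ containing $x$, I first choose a plaque $V'$ with $x\in V'\subset V$ whose first-level closure $\overline{V'_1}$ (in $S_0$) is compact and contained in $V_1$; this is possible by taking a slightly smaller sub-disk of $V_1$ around $x_1$ and pulling back through the plaque structure. Applying Lemma \ref{lemma.neighborhood.nomax.1} with $V'$ in place of $V$ yields a plaque $W_0$ with $\overline{W_0}\subset V'$ and increasing positive integers $m(1)<m(2)<\cdots$ such that $c\in (V'-\overline{W_0})_{m(j)}$ for every $j$ while $(U-V')_{m(j)}$ contains no critical points of $f$. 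By the plaque-level identification $(V'-\overline{W_0})_1 = V'_1-\overline{(W_0)_1}$ applied to the element of $V'-\overline{W_0}$ whose $m(j)$-th coordinate is $c$, the point $f^{m(j)-1}(c)$ lies in $V'_1-\overline{(W_0)_1}$ for every $j$.

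Since each $f^{m(j)-1}(c)$ lies in the compact set $\overline{V'_1}$, I extract a convergent subsequence, relabeled $n(1)<n(2)<\cdots$, with $f^{n(j)-1}(c)\to v_1$ for some $v_1\in \overline{V'_1}\subset V_1$. The sequence lies in the open set $S_0-\overline{(W_0)_1}$, whose closure misses the interior of $\overline{(W_0)_1}$ and in particular the open set $(W_0)_1$, so $v_1\notin (W_0)_1$. Now I shrink $W_0$ to a plaque $W$ with $x\in W$, $\overline{W}\subset W_0$, and $\overline{W_1}$ compactly contained in $(W_0)_1$ (again by a smaller sub-disk at level $1$ pulled back). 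Then $\overline{W_1}\subset (W_0)_1$, so $v_1\notin (W_0)_1$ forces $v_1\notin \overline{W_1}$, giving $v_1\in V_1-\overline{W_1}$ as required. The remaining conclusions of Lemma \ref{lemma.neighborhood.nomax.1} for the original $V$ are preserved: $\overline{W}\subset \overline{W_0}\subset V$; the inclusion $\overline{W}\subset \overline{W_0}$ upgrades $c\in(V'-\overline{W_0})_{n(i)}$ to $c\in (V-\overline{W})_{n(i)}$; and $(U-V)_{n(i)}\subset (U-V')_{n(i)}$ still contains no critical points.

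The main obstacle is preventing $v_1$ from landing on the topological boundary $\overline{W_1}-W_1$, which the compactness argument alone does not rule out; this is defused precisely by the secondary shrinking step that places $\overline{W_1}$ compactly inside $(W_0)_1$. A subsidiary concern, that $v_1$ might escape to $\partial V_1$, is absorbed by the initial compactly-contained passage from $V$ to $V'$.
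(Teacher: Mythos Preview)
Your argument is correct and follows essentially the same two-step shrinking strategy as the paper: first pass from $V$ to a compactly contained $V'$ so that compactness forces the limit $v_1$ to stay in $V_1$, then pass from the $W_0$ (the paper's $W'$) produced by Lemma~\ref{lemma.neighborhood.nomax.1} to a smaller $W$ with $\overline{W}\subset W_0$ so that $v_1\notin (W_0)_1$ forces $v_1\notin \overline{W_1}$. The only cosmetic difference is that the paper extracts the convergent subsequence inside the single compact set $\overline{V'_1}-W'_1$ in one stroke, whereas you split the containment into the two separate observations $v_1\in\overline{V'_1}$ and $v_1\notin (W_0)_1$; the content is identical.
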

\begin{proof}
For any open neighborhood $V\subset U$ of $x$ of Lemma \ref{lemma.neighborhood.nomax.1}, we can find an open neighborhood $V'$ of $x$, such that its closure $\bar{V}'$ is contained in $V$. By Lemma \ref{lemma.neighborhood.nomax.1}, there exists some open neighborhood $W'$ of $x$, with its closure $\bar{W}'$ contained inside $V'$, and infinitely many positive integers $n(1)<n(2)<...$, such that each $(V'-\bar{W}')_{n(i)}$ contains the critical point $c$, while all the $(U-V')_{n(i)}$ do not contain any critical points of $f$.
\\ \\
Since $(\bar{V}'-W')_1=\bar{V}'_1-W'_1$ is compact, we can find a subsequence $(n'(1),n'(2),...)$ of the sequence $(n(1),n(2),...)$, for which the sequence $(f^{n'(1)-1}(c),f^{n'(2)-1}(c),...$ converges to some point $v_1\in \bar{V}'_1-W'_{1}$. Now take $W$ to be any open neighborhood of $x$, such that its closure $\bar{W}$ is contained in $W'$. We get that $v_1\in V_1-\bar{W}_{1}$. Clearly, each $(V-\bar{W})_{n(i)}$ contains $c$, while all the $(U-V)_{n(i)}$ do not contain any critical points of $f$.
\end{proof}
\begin{lem}\label{lemma.neighborhood.nomax.3} The increasing integers $n(1),n(2),...$ in Lemma \ref{lemma.neighborhood.nomax.2} can be selected in such a way, that in each level $V_q$ of $V$, where $q=n(i),n(i)+1,...,n(i+1)-1$, we can find $2^i$ pairwise disjoint connected components $$\bar{W}_{q}(1),\bar{W}_{q}(2),\bar{W}_{q}(3),...,\bar{W}_{q}(2^i)$$ of the open set $f^{-q+1}(\bar{W}_1)$, so that for each $\bar{W}_{n(i+1)-1}(j)$, where $i=1,2,...$ and $j=1,2,...,2^i$, there are exactly two different sets $\bar{W}_{n(i+1)}(j_1)$ and $\bar{W}_{n(i+1)}(j_2)$ amongst the open sets $$\bar{W}_{n(i+1)}(1),\bar{W}_{n(i+1)}(2),\bar{W}_{n(i+1)}(3),...,\bar{W}_{n(i+1)}(2^{i+1}),$$ which map by $f$ onto $\bar{W}_{n(i+1)-1}(j)$.
\end{lem}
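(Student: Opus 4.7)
The plan is to proceed by induction on $i$, selecting an appropriate tail of the sequence $n(1)<n(2)<\ldots$ from Lemma \ref{lemma.neighborhood.nomax.2}. The central idea is to build all distinguished components $\bar{W}_q(j)$ inside the plaque $V_q$ itself, where $f|_{V_q}\colon V_q\to V_{q-1}$ is conformally equivalent to a power map $z\mapsto z^{t_q}$: at levels where $c\notin V_q$ we have $t_q=1$ (univalent), and at the levels where $c\in V_q$ we have $t_q\ge 2$, which is what supplies the doubling. Set $\bar{W}_q(1):=\bar{W}_q$ for $q\in[1,n(1)-1]$ as the base of the tree.

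For the inductive step, suppose the $2^i$ distinguished components at level $n(i+1)-1$ have been constructed inside $V_{n(i+1)-1}$. Since $c\in V_{n(i+1)}$ by the choice of the sequence, the map $f\colon V_{n(i+1)}\to V_{n(i+1)-1}$ is conformally $z\mapsto z^{t_{n(i+1)}}$ with $t_{n(i+1)}\ge 2$; provided no distinguished disk $\bar{W}_{n(i+1)-1}(j)$ contains the critical value $f(c)$, its preimage inside $V_{n(i+1)}$ consists of exactly $t_{n(i+1)}\ge 2$ pairwise disjoint disks, each mapping conformally onto $\bar{W}_{n(i+1)-1}(j)$. Designating two of them as $\bar{W}_{n(i+1)}(j_1)$ and $\bar{W}_{n(i+1)}(j_2)$ doubles the count to $2^{i+1}$. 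For the intermediate levels $q\in[n(i)+1,n(i+1)-1]$, where we arrange $c\notin V_q$ (by the freedom to thin the sequence), $f|_{V_q}$ is univalent and each $\bar{W}_{q-1}(j)$ has a unique preimage disk in $V_q$, keeping the count at $2^i$.

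The avoidance condition $f(c)\notin\bar{W}_{n(i+1)-1}(j)$ for every $j$ is routed back to Lemma \ref{lemma.neighborhood.nomax.2} via the chain of preimages. By construction $\bar{W}_q(j)\subset f^{-(q-1)}(\bar{W}_1)$, so $f(c)\in\bar{W}_{n(i+1)-1}(j)$ would force $f^{n(i+1)-1}(c)\in\bar{W}_1$. But Lemma \ref{lemma.neighborhood.nomax.2} gives $f^{n(i)-1}(c)\to v_1\in V_1-\bar{W}_1$, and since $\bar{W}_1$ is closed with $v_1\notin\bar{W}_1$, the iterates $f^{n(i+1)-1}(c)$ eventually lie outside $\bar{W}_1$. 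Replacing $(n(i))$ by the appropriate tail secures this avoidance at every stage of the induction.

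The main obstacle I expect is verifying that components chosen inside the plaque $V_q$ are genuinely connected components of the full preimage $f^{-(q-1)}(\bar{W}_1)\subset S_q$, and not merely components of its restriction to $V_q$. This reduces to showing that each distinguished disk $\bar{W}_q(j)$ is compactly contained in $V_q$ with positive distance from $\partial V_q$, so that it cannot link up with parts of the preimage outside the plaque; this can be arranged by taking $\bar{W}_1$ deep inside $V_1$. A diagonal extraction coordinates the subsequence choices with this separation, and the doublings compound inductively to yield the binary tree structure asserted in the lemma.
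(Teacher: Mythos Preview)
Your approach is correct and is in fact cleaner than the paper's. The key difference: you exploit the convergence $f^{n(i)-1}(c)\to v_1\in V_1\setminus\bar W_1$ from Lemma~\ref{lemma.neighborhood.nomax.2} to ensure, after passing to a tail, that $f^{n(i)-1}(c)\notin\bar W_1$ for every $i$; this single observation forces $f(c)\notin\bar W_{n(i)-1}(j)$ for all $j$ and hence guarantees the doubling at every marked level simultaneously. The paper does \emph{not} use this convergence here. Instead it builds one explicit ``shadow'' component $\bar W'_m\subset V_m$ via Riemann--Hurwitz, pulls it back, and then runs a dichotomy: either $c$ avoids $\bar W'$ along an infinite subsequence (and the tree is grown at those levels), or $c$ is eventually trapped in $\bar W'$ (and one switches to a different preimage system $\hat W$). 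Your route bypasses this case analysis entirely.

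Two small corrections to your write-up. First, you cannot arrange $c\notin V_q$ at intermediate levels merely by thinning $(n(i))$; other critical points, or $c$ itself at unlisted levels, may lie in $V_q$. This is harmless: at intermediate levels you only need \emph{one} preimage component per $\bar W_{q-1}(j)$, and that always exists regardless of $t_q$, so just drop the univalence claim. Second, your ``main obstacle'' dissolves once you note that $f|_{V_q}\colon V_q\to V_{q-1}$ is proper (being conformally $z^{t_q}$ on the disk), so if $\bar W_{q-1}(j)$ is compact in $V_{q-1}$ then its preimage in $V_q$ is compact in $V_q$; a clopen argument then shows each of its components is already a full component of $f^{-(q-1)}(\bar W_1)$ in $S_q$. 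No separate diagonal extraction is required beyond the single tail-cut that secures $f^{n(i)-1}(c)\notin\bar W_1$.
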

\begin{proof}
Let $m$ be the smallest integer, greater than $1$, such that $V_m-\bar{W}_m$ contains $c$. Then $V_m$ contains at least one more connected component $\bar{W}'_m$ of $f^{-1}(\bar{W}_{m-1})$, which is disjoint from $\bar{W}_m$. Indeed, since $V_{m-1}$ is a regular space, we can find an open, simply connected subset $\Omega_{m-1}$  of $V_{m-1}$, which contains $\bar{W}_{m-1}$ and does not contain $f(c)$. Let $\Omega_{m}$ be the connected component of $f^{-1}(\Omega_{m-1})$, which contains $\bar{W}_m$. Clearly,  $\Omega_{m}\subset V_{m}$. Since the simply connected sets $V_m$, $V_{m-1}$, $\Omega_{m}$ and $\Omega_{m-1}$ have Euler characteristic $1$, and since $f:V_{m}\rightarrow V_{m-1}$ and $f:\Omega_{m}\rightarrow \Omega_{m-1}$ are branched covering maps, it follows from the Riemann–Hurwitz formula, that the degree of $f:V_{m}\rightarrow V_{m-1}$ is greater than the degree of $f:\Omega_{m}\rightarrow \Omega_{m-1}$ by, at least, the degree (the ramification index) of $f$ at $c$ minus one. But the degree of $f$ at a critical point $c$ is greater than or equal to $2$. Thus, a generic point of $\bar{W}_{m-1}$ has a pre-image (under $f$) in $V_{m}$, which is not contained in $\bar{W}_{m}$. Hence, $V_m$ contains at least one more connected component $\bar{W}'_m$ of $f^{-1}(\bar{W}_{m-1})$, which is disjoint from $\bar{W}_m$.
\\ \\
Denote by $\bar{W}'_n$, for all $n\geq m$, the pre-image of $\bar{W}'_m$ under $f^{m-n}$ in $V_n$.
\\ \\
If the sequence $(n(1),n(2),...)$ of increasing positive integers from Lemma \ref{lemma.neighborhood.nomax.2} has an infinite subsequence $(n'(1),n'(2),...)$, for which $c$ is always contained in the set $V_{n'(i)}-\bar{W}'_{n'(i)}$, then just replace positive integers $n(1)<n(2)<...$ in Lemma \ref{lemma.neighborhood.nomax.2} by positive integers $n'(1)<n'(2)<...$. Notice, that for all $n'(i)$, where $i=1,2,...$, each connected component of $\bar{W}'_{n'(i+1)-1}$ has at least two connected components of $\bar{W}'_{n'(i+1)-1}$, which are mapped by $f$ onto $\bar{W}'_{n'(i+1)-1}$. Let $i(q)=0$ for $q<n'(1)$, and for $n'(1)\ge q$ let $i(q)$ be such a number, that $n'(i)\le q\le n'(i+1)-1$. We can, inductively on $q$, select some $2^{i(q)}$ pairwise disjoint connected components $\bar{W}_{q}(1),\bar{W}_{q}(2),\bar{W}_{q}(3),...,\bar{W}_{q}(2^{i(q)})$ of $f^{-q+1}(\bar{W}_1)$, in such a way, that for every $\bar{W}_{n'(i+1)-1}(j)$ there are two different $\bar{W}_{n'(i+1)}(j_1)$ and $\bar{W}_{n'(i+1)}(j_2)$ which are mapped onto $\bar{W}_{n'(i+1)-1}(j)$ by $f$.
\\ \\
If the sequence $(n(1),n(2),...)$ of increasing positive integers from Lemma \ref{lemma.neighborhood.nomax.2} does not have an infinite subsequence of indices $n$, for which $c$ is contained in $V_{n}-\bar{W}'_{n}$, then exists some integer $\upsilon$ such that for all $i=1,2,...$, the critical point $c$, which is contained in $V_{n(\upsilon+i)}$, is actually contained in its subset $\bar{W}'_{n(\upsilon+i)}$. But this implies, that for all $i=1,2,...$, the full pre-image $\hat{W}_{n(\upsilon+i)}$ in $V_{n(\upsilon+i)}$ of $\bar{W}_{n(\upsilon)}$ under $f^{n(\upsilon)-n(\upsilon+i)}$, does not contain the critical point $c$. Thus, just replace positive integers $n(1)<n(2)<...$ in Lemma \ref{lemma.neighborhood.nomax.2} by positive integers $n''(1)=n(\upsilon+1)<n''(2)=n(\upsilon+1)<...$. For all integers $n''(i)$, where $i=0,1,2,...$, each connected component of $\hat{W}'_{n''(i+1)-1}$ has at least two connected components of $\hat{W}'_{n''(i+1)-1}$, which are mapped by $f$ onto $\bar{W}'_{n''(i+1)-1}$. Let $i(q)=0$ for $q<n''(1)$, and for $n''(1)\ge q$ let $i(q)$ be such a number, that $n''(i)\le q\le n''(i+1)-1$. We can, inductively on $q$, select some $2^{i(q)}$ pairwise disjoint connected components $\bar{W}_{q}(1),\bar{W}_{q}(2),\bar{W}_{q}(3),...,\bar{W}_{q}(2^{i(q)})$ of $f^{-q+1}(\bar{W}_1)$, in such a way, that for every $\bar{W}_{n''(i+1)-1}(j)$ there are two different $\bar{W}_{n''(i+1)}(j_1)$ and $\bar{W}_{n''(i+1)}(j_2)$ which are mapped onto $\bar{W}_{n''(i+1)-1}(j)$ by $f$.
\end{proof}
\begin{thm} \label{nomax.main.1} Let $x\in S_{\infty}$ be an irregular point such that, for some critical point $c$ of $f:S_0\rightarrow S_0$, the signature $sign(x,c)$ has no maximal element. Then there exists some open neighborhood $U$ of $x$, such that for any open neighborhood $V$ of $x$, whose closure $\bar{V}$ is contained in $U$, there exists a point $v\in V$, different from $x$, such that the open set $V-\{v\}$ consists of an uncountable number of path-connected components.
\end{thm}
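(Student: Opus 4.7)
The plan is to build from the given $V$ an explicit point $v\in V$ with $v\ne x$ and an uncountable family of pairwise distinct points $\{y^{\epsilon}\}_{\epsilon\in\{1,2\}^{\mathbb{N}}}\subset V$, and then to prove that any continuous path in $V$ connecting two of the $y^{\epsilon}$'s (for uncountably many pairs $\epsilon\ne\epsilon'$) is forced to pass through $v$. This will place the $y^{\epsilon}$'s in uncountably many distinct path-connected components of $V-\{v\}$, giving the claim. The crucial tool is the uniqueness principle behind Lemma \ref{Tychonoff}.

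I start by shrinking $V$ once to some $V'$ with $\bar{V}'\subset V$ and applying Lemmas \ref{lemma.neighborhood.nomax.1}, \ref{lemma.neighborhood.nomax.2}, and \ref{lemma.neighborhood.nomax.3} to $V'$. This produces a plaque $W\subset V'$ with $\bar{W}\subset V'$, increasing integers $n(1)<n(2)<\ldots$ with $c\in(V'-\bar{W})_{n(i)}$ and each $(U-V')_{n(i)}$ critical-point-free, the limit point $v_1=\lim_i f^{n(i)-1}(c)\in V'_1-\bar{W}_1$, and a binary tree of preimage components $\bar{W}_q(j)\subset V'_q$ with two-to-one branching at the $n(i)$ levels. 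A diagonal compactness argument on the sequences $(f^{n(i)-n}(c))_i\subset\overline{V'_n}\subset V_n$ further refines the subsequence so that $f^{n(i)-n}(c)\to v_n$ at every level $n$; continuity of $f$ gives $f(v_{n+1})=v_n$, so $v=(v_n)\in V$. Because $v_1\notin\bar{W}_1\ni x_1$, we have $v\ne x$. For each $\epsilon\in\{1,2\}^{\mathbb{N}}$, the nested choice of branches through the binary tree produces a distinct point $y^{\epsilon}\in\bar{W}\subset V$.

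The heart of the argument is to show that for any pair $\epsilon\ne\epsilon'$ differing at infinitely many positions --- still uncountably many such pairs --- no continuous path $p:[0,1]\to V$ with $p(0)=y^{\epsilon}$ and $p(1)=y^{\epsilon'}$ can stay inside $V-\{v\}$. At each differing index $k$, the projection $p_{n(k+1)}$ must cross between the components $\bar{W}_{n(k+1)}(j_1)$ and $\bar{W}_{n(k+1)}(j_2)$ of $f^{-n(k+1)+1}(\bar{W}_1)$ in $V_{n(k+1)}$. I pick a crossing time $t_k$ at which $p(t_k)_{n(k+1)}=c$; then $p(t_k)_n=f^{n(k+1)-n}(c)$ for every $n\le n(k+1)$, and by the construction of $v$ we get $p(t_k)_n\to v_n$ at every fixed $n$. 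Hence $(p(t_k))$ converges coordinatewise (Tychonoff) to $v$. Since $p([0,1])$ is compact in $V$, the sequence $(p(t_k))$ has a plaque-convergent subsequence $p(t_{k_j})\to q\in p([0,1])$; plaque convergence forces coordinatewise convergence, so $q_n=v_n$ for every $n$ and hence $q=v$, by the same uniqueness principle that underlies Lemma \ref{Tychonoff}. This gives $v\in p([0,1])$, contradicting $p([0,1])\subset V-\{v\}$.

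The main obstacle will be justifying the selection $p(t_k)_{n(k+1)}=c$ --- a priori a plaque-continuous path between the two preimage components of $\bar{W}_{n(k+1)-1}(j)$ inside $V_{n(k+1)}$ can detour around $c$, so exact passage through $c$ is not automatic. Overcoming this will likely require iterating Lemma \ref{lemma.neighborhood.nomax.3}'s doubling construction to refine $W$ and the integers $n(i)$ further, and combining this with a monodromy or winding-number argument at the critical point responsible for each doubling, exploiting the fact that the plaque topology is strictly finer than the Tychonoff topology and so places stronger constraints on continuous paths than coordinatewise continuity alone. This juxtaposition of local connectivity and local path-connectivity --- as emphasized in the paper's discussion preceding Lemma \ref{Tychonoff} --- is precisely where the new argument sharpens the technique of Theorem \ref{irreg.topol.old}.
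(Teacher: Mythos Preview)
Your sketch is close in spirit to the paper's argument, and you have correctly identified your own weak point: a path crossing between the two preimage components $\bar W_{n(k+1)}(j_1)$ and $\bar W_{n(k+1)}(j_2)$ inside the simply-connected disk $V_{n(k+1)}$ is under no obligation to hit $c$ exactly, since the punctured disk is still path-connected. So the selection $p(t_k)_{n(k+1)}=c$ cannot be made, and the coordinatewise convergence $p(t_k)\to v$ collapses. The fix you propose---further refinement of $W$ and the $n(i)$---does not remove this obstacle; no matter how the tree is refined, the path can still detour around $c$ at every level.

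The paper avoids this by running the argument in the opposite direction. It does not try to place $v$ on an arbitrary path $p$; instead it first uses the compactness of $p([0,1])$ together with Lemma~\ref{Tychonoff} to find a level $m$ at which the projection $p_m$ misses $v_m$, and then, by regularity, a full neighborhood $\Theta(m)_m$ of $v_m$. All paths in $V-\{v\}$ are stratified by the sets $\Upsilon(m)=\{p: p_m([0,1])\cap\Theta(m)_m=\emptyset\}$, and every path lies in some $\Upsilon(m)$. For $p\in\Upsilon(m)$ joining $x$ to a branch-point $x'$ (with $x'_1=x_1$, so $p_1$ is a closed loop), the loop $p_1$ now avoids all but finitely many of the critical values $f^{n(i)-1}(c)$ (which accumulate on $v_1$), and hence has a well-defined winding number around almost all of them. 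This monodromy data determines, up to finitely many free choices, which branch $\bar W_{n(i)}(j)$ the endpoint $x'_{n(i)}$ can occupy, so from $x$ each $\Upsilon(m)$ reaches only countably many $x'$. A countable union over $m$ is still countable, while the binary tree supplies uncountably many $x'$; this is the contradiction. Thus the winding-number idea you gesture at is exactly right, but it must be applied after the avoidance-at-some-level step, not as a device for putting $c$ on the path.
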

\begin{proof}
Let $U$ be an open neighborhood of $x$ such that, by Lemmas \ref{lemma.neighborhood.nomax.1}, \ref{lemma.neighborhood.nomax.2} and \ref{lemma.neighborhood.nomax.3}, for any open neighborhood $V\subset U$ of $x$, we can find an open neighborhood $W$ of $x$, whose closure $\bar{W}$ is contained inside $V$, and infinitely many positive integers $n(1),n(2),...$ so that:
\begin{itemize}
\item $(V-\bar{W})_{n(i)}$ contains the critical point $c$, while $(U-V)_{n(i)}$ does not contain any critical points of $f$;
\item The sequence $(f^{n(1)-1}(c),f^{n(2)-1}(c),...)$ converges to some point $v_1\in V_1-\bar{W}_{1}$;
\item For each $q=1,2,...$, we can select $2^i$ (where $i$ is $0$ for $q<n(1)$, and $i$ is such a number, that $n(i)\le q\le n(i+1)-1$, otherwise) pairwise disjoint connected components $\bar{W}_{q}(1),\bar{W}_{q}(2),\bar{W}_{q}(3),...,\bar{W}_{q}(2^i)$ of $f^{-q+1}(\bar{W}_1)$, in such a way, that for every $\bar{W}_{n(i+1)-1}(j)$ there are two different sets $\bar{W}_{n(i+1)}(j_1)$ and $\bar{W}_{n(i+1)}(j_2)$ among the sets $$\bar{W}_{n(i+1)}(1),\bar{W}_{n(i+1)}(2),\bar{W}_{n(i+1)}(3),...,\bar{W}_{n(i+1)}(2^{i+1}),$$ which map by $f$ onto $\bar{W}_{n(i+1)-1}(j)$.
\end{itemize}
Since each open set $(V-\bar{W})_{n(i)}$, for $i=1,2,...$, contains the point $c$, all the points $f^{n(q)-q}(c),f^{n(q+1)-q}(c),f^{n(q+2)-q}(c)...$ must belong to the open set $V_q-\bar{W}_{q}$ for all $q=1,2,3,...$. Since the point $v_1\in V_1-\bar{W}_{1}$ is the accumulation point of the sequence $(f^{n(1)-1}(c),f^{n(2)-1}(c),...)$, some pre-image $v_2\in V_2-\bar{W}_{2}$ of $v_1$ under $f$ must be an accumulation point of the set $\{f^{n(2)-2}(c),f^{n(3)-2}(c),...\}$. By the same logic, some pre-image $v_3\in V_3-\bar{W}_{3}$ of $v_2$ under $f$ must be an accumulation point of the set $\{f^{n(3)-3}(c),f^{n(4)-3}(c),...\}$. Continuing this way, we construct a point $v=(v_1,v_2,...)$ in $V-\bar{W}$, such that each $v_q$ is an accumulation point of the set $f^{n(q)-q}(c),f^{n(q+1)-q}(c),f^{n(q+2)-q}(c)...$ inside $V_q-\bar{W}_{q}$.
\\ \\
For any path $p:[0,1]\rightarrow V-\{v\}$, which connects any two points in $V-\{v\}$, there exists some positive integer $m$, such that the path $p_m:[0,1]\rightarrow V_m$ avoids the point $v_m\in V_m$. Otherwise, for all $i=1,2,...$, we can find points $v(i)\in V-\{v\}$ with $v(i)_i=v_i$, which belong to the compact subset $p([0,1])\subset V-\{v\}$ of $S_{\infty}$. By Lemma \ref{Tychonoff}, any converging subsequence of the sequence $(v(1),v(2),...)$ in $p([0,1])$ must converge to $v$, which is not contained in $V-\{v\}$. But this is a contradiction. Thus, for each $p:[0,1]\rightarrow V-\{v\}$ we have a positive integer $m$, such that the path $p_m:[0,1]\rightarrow V_m$ avoids the point $v_m\in V_m$.
\\ \\
Let $(\Theta(1),\Theta(2),...)$ be a sequence of open neighborhoods of $v$ in $V-\bar{W}$ shrinking to $v$. For every positive integer $m$, denote by $\Upsilon(m)$ the set of all paths $p:[0,1]\rightarrow V-\{v\}$, such that the path $p_m:[0,1]\rightarrow V_m$ does not enter the subset $\Theta(m)_m$ of $V_m$. In other words, $p_m([0,1])\cap \Theta(m)_m=\emptyset$. Clearly, for all $m$, $\Upsilon(m)$ is a subset of $\Upsilon(m+1)$. From the fact, that for any path $p:[0,1]\rightarrow V-\{v\}$ exists some positive integer $m$, such that the path $p_m:[0,1]\rightarrow V_m$ avoids the point $v_m\in V_m$, and from the fact, that all the sets $V_m$ are regular spaces, it follows that the union of all $\Upsilon(m)$, as $m=1,2,3,...$, is the entire set of all the paths $p:[0,1]\rightarrow V-\{v\}$.
\\ \\
For any $m=1,2,...$, consider any path $p:[0,1]\rightarrow V-\{v\}$ from the set of paths $\Upsilon(m)$, which connects the point $x$ to a point $x'\in V-\{v\}$, where $x'_1=x_1$ and each $x'_q$, for $q=2,3,...$, belongs to some set $W_q(j)$, described in the itemization above. Since, the path $p_m:[0,1]\rightarrow V_m$ avoids the subset $\Theta(m)_m$ of $V_m$, the closed path $p_1:[0,1]\rightarrow V-\{v\}$ has some looping number around almost all, except a finite number, of the points of the sequence $(f^{n(1)-1}(c),f^{n(2)-1}(c),...)$. Thus, the point $x$ can be connected to a countable number of different points $x'$, where $x'_1=x_1$ and each $x'_q$, for $q=2,3,...$, belongs to some $W_q(j)$, by paths from $\Upsilon(m)$. But the union of all $\Upsilon(m)$, as $m=1,2,3,...$, is the entire set of all the paths $p:[0,1]\rightarrow V-\{v\}$. But it follows from Lemma \ref{lemma.neighborhood.nomax.3} that there is an uncountable number of different points $x'$ in $V-\{v\}$, with $x'_1=x_1$ and with each $x'_q$, for $q=2,3,...$, belonging to some $W_q(j)$. Indeed, we have two different choices for each $x'_{n(i)}$, as $i=2,3,...$. Thus, $V-\{v\}$ has an uncountable number of path-connected components.
\end{proof}
\begin{cor} \label{nomax.main.cor.1} If $x\in S_{\infty}$ is an isolated irregular point, and for some critical point $c$ of $f:S_0\rightarrow S_0$, the signature $sign(x,c)$ has no maximal element, then there exists some open neighborhood $U$ of $x$, such that for any open neighborhood $V\subset U$ of $x$, the open set $V$ consists of an uncountable number of path-connected components.
\end{cor}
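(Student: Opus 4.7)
My plan is to leverage Theorem \ref{nomax.main.1} together with the isolation hypothesis to upgrade the statement ``$V-\{v\}$ has uncountably many path-components'' to ``$V$ itself has uncountably many path-components''. First I would pick a neighborhood $U_0$ of $x$ in which $x$ is the only irregular point of $S_\infty$, so that every $y\in U_0\setminus\{x\}$ is regular and therefore admits an open neighborhood homeomorphic to an open unit disk. Let $U'$ be the neighborhood supplied by Theorem \ref{nomax.main.1}, applied to a critical point $c$ for which $sign(x,c)$ has no maximal element. Using the regularity and first-countability of $S_\infty$, I would choose an open $U$ with $x\in U\subset U_0$ and $\bar U\subset U'$. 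This $U$ will be the neighborhood claimed by the corollary.

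Given any open neighborhood $V$ of $x$ with $V\subset U$, the inclusion $\bar V\subset\bar U\subset U'$ lets me apply Theorem \ref{nomax.main.1} directly to $V$, yielding a point $v\in V$, $v\ne x$, such that $V-\{v\}$ has uncountably many path-connected components. Because $v\in U_0$ and $v\ne x$, the point $v$ is regular, so it has an open neighborhood $D\subset V$ homeomorphic to an open disk with $v$ at the center. The punctured disk $D-\{v\}$ is path-connected, so it lies inside a single path-component of $V-\{v\}$, which I call $C_0$.

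The decisive step, and the one I expect to be the main obstacle, is showing that reinserting $v$ causes no further merging of path-components beyond joining $\{v\}$ to $C_0$. For any path $p:[0,1]\to V$ with $p(0)=a\in V-\{v\}$ and $p(1)=v$, I would set $s=\inf\{t:p(t)=v\}$; continuity forces $p((s-\varepsilon,s))\subset D-\{v\}\subset C_0$ for small $\varepsilon>0$, so that $p|_{[0,s-\varepsilon]}$ is a path in $V-\{v\}$ from $a$ to a point of $C_0$, which forces $a\in C_0$. A symmetric argument rules out paths in $V$ connecting two points of $V-\{v\}$ whose trace passes through $v$. Consequently the path-component of $V$ containing $v$ equals $C_0\cup\{v\}$, while every other path-component of $V-\{v\}$ persists as a distinct path-component of $V$; uncountably many components minus one plus one is still uncountably many, giving the required conclusion. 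This is precisely where the isolation hypothesis is essential, since it is what guarantees that $v$ admits the Euclidean disk neighborhood $D$ used in the no-fusion argument.
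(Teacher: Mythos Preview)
Your proposal is correct and follows essentially the same route as the paper's proof: apply Theorem~\ref{nomax.main.1} to obtain $v\in V\setminus\{x\}$ with $V-\{v\}$ having uncountably many path-components, use the isolation of $x$ to force $v$ to be regular and hence to admit a disk neighborhood, and conclude that reinserting $v$ cannot merge distinct path-components. You are in fact more careful than the paper in two places---you explicitly arrange $\bar V\subset U'$ so that the closure hypothesis of Theorem~\ref{nomax.main.1} is met, and you spell out the ``no-fusion'' argument via $D-\{v\}$---whereas the paper just asserts that removing $v$ does not disconnect any path-component of $V$.
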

\begin{proof} By Theorem \ref{nomax.main.1}, there exists some open neighborhood $U'$ of $x$, such that for any open neighborhood $V\subset U$ of $x$ there exists a point $v\in V$, different from $x$, such that $V-\{v\}$ consists of an uncountable number of path-connected components. Since $x$ is an isolated irregular point, there exists some open neighborhood $W$ of $x$ in $S_{\infty}$, which does not contain any irregular points other than $x$. Take $U=U'\cap W$.
\\ \\
Now, for any open neighborhood $V\subset U$ of $x$ there exists a point $v\in V$, different from $x$, such that $V-\{v\}$ consists of an uncountable number of path-connected components. But this point $v$ is a regular point. Hence, there exists some open neighborhood $\Delta$ of $v$ in $S_{\infty}$, which is homeomorphic to an open unit disk. Thus, $\Delta-\{v\}$ is path-connected. So, removing $v$ from $V$ does not disconnect any path-connected components of $V$. Hence, $V$ must consist of an uncountable number of path-connected components.
\end{proof}
Theorem \ref{nomax.main.2} below asserts that the other direction in Theorem \ref{nomax.main.1} is also true:
\begin{thm} \label{nomax.main.2} Let $x\in S_{\infty}$ be an irregular point, such that for some every point $c$ of $f:S_0\rightarrow S_0$, the signature $sign(x,c)$ has a maximal element. Then there exists some open neighborhood $U$ of $x$, such that for any open neighborhood $V\subset U$ of $x$ and for any point $v\in V$, different from $x$, the open set $V-\{v\}$ is path-connected.
\end{thm}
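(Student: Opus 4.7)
Proof plan. The strategy is to use the max-element hypothesis to impose a clean critical structure on $V$ at deep enough levels, and then to build any needed path in $V - \{v\}$ by choosing a single path in a single disk-level $V_N$ and lifting it all the way up.

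First, by Corollary \ref{cor.finite} (and the observation preceding Section 3 that the maximal element of $sign(x,c)$ is always realized as some $ind(U,c)$), I would choose a plaque $U$ around $x$ such that for each of the finitely many critical points $c_k$ of $f$, $ind(U,c_k)$ realizes the maximal element of $sign(x,c_k)$. For any plaque $V \subset U$ containing $x$, we then have $ind(V,c_k) = ind(U,c_k)$ in $I$ for every $k$. Because the plaque definition forces each level $V_n$ to contain at most one critical point of $f$, and because $x_n$ always lies in $V_n$, this equality yields an integer $N=N(V)$ such that for all $n \geq N$ and every $k$, $c_k \in V_n$ implies $c_k = x_n$. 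Consequently, for $n > N$ the branched covering $f|_{V_n}\colon V_n \to V_{n-1}$, whenever its degree exceeds one, is critical exactly at $x_n$, its critical value is $x_{n-1}$, and the full preimage of $x_{n-1}$ in the disk $V_n$ reduces to the single point $\{x_n\}$.

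Now fix $v \in V$ with $v \neq x$ and take any $y \in V - \{v\}$. Choose $N$ large enough that the previous conclusion holds and also $y_n \neq v_n$, $x_n \neq v_n$ hold for all $n \geq N$ (both separations occur for large $N$, since coincidence sets of two points in $S_\infty$ are closed under $f$). Pick a continuous path $q\colon [0,1] \to V_N - \{v_N\}$ from $y_N$ to $x_N$ meeting $x_N$ only at $t=1$; such a path exists because $V_N$ is a disk. Define $q^{(n)}=f^{N-n}\circ q$ for $n<N$, and $q^{(n)}$ for $n \geq N$ inductively as the unique continuous lift of $q^{(n-1)}$ to $V_n$ starting at $y_n$ (uniqueness because $q^{(n-1)}$ meets the critical value $x_{n-1}$ only at its endpoint). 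By the structural conclusion above, induction yields $q^{(n)}(1)=x_n$ for every $n \geq N$, so the family $\{q^{(n)}\}$ assembles into a continuous path $p\colon [0,1] \to V$ with $p(0)=y$, $p(1)=x$; plaque continuity at the endpoint $t=1$ follows from the self-similar scaling of the plaque disks, which renders the needed $\delta$ uniform in $n$. Since $q$ avoids $v_N$ and $p(t)=v$ would force $p_N(t)=v_N$, the path $p$ takes values in $V - \{v\}$. Concatenating such a path from $y$ to $x$ with the reverse of the analogous path from another $y'$ to $x$ produces a path from $y$ to $y'$ in $V - \{v\}$, proving path-connectedness.

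The main obstacle I expect is the first step: extracting from the bare max-element hypothesis the assertion that, for large $n$, the only critical point of $f$ that $V_n$ can contain is $x_n$. This is precisely the mechanism whose failure in Theorem \ref{nomax.main.1} gave rise to the $2^i$-many components $\bar{W}_n(j)$ and thereby allowed lifts to escape to preimages of $x_{n-1}$ other than $x_n$; pinpointing the hypothesis at exactly this step is the heart of the argument.
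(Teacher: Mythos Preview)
Your approach has a genuine gap at precisely the step you flagged as the main obstacle. The assertion that, for large $n$, any critical point $c_k \in V_n$ must equal $x_n$ is false. Take the invariant lift $x = (x_0, x_0, \ldots)$ of a parabolic fixed point $x_0$ (for instance $f(z) = z^2 + \tfrac14$ with $x_0 = \tfrac12$). The critical point $c = 0$ lies in the immediate basin $B$, so by Lemma~\ref{parab.max} the signature $sign(x,c) = \alpha([1,1,\ldots])$ has a maximal element. Yet for any plaque $V$ around $x$, one has $c \in V_n$ for all large $n$ while $c \neq x_0 = x_n$. The equality $ind(V,c) = ind(U,c)$ in $I$ says only that $c \in V_n \Leftrightarrow c \in U_n$ for large $n$; it says nothing about $c$ coinciding with $x_n$. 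The plaque condition and the fact that $x_n \in V_n$ do not help, since $x_n$ need not be a critical point at all.

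This breaks your lifting step: when $f|_{V_n}$ is conjugate to $z^t$ with critical point $c \neq x_n$, the preimage of $x_{n-1}$ in $V_n$ consists of $t$ distinct points, and the lift $q^{(n)}$ of $q^{(n-1)}$ starting at $y_n$ need not terminate at $x_n$; which preimage it hits depends on the winding of $q^{(n-1)}$ about the critical value $f(c)$, over which you have no control. The paper avoids ever lifting a path that reaches $x$. Instead it fixes a nested sequence $W(1)=V \supset W(2) \supset \cdots$ shrinking to $x$, uses $ind(U,c)=ind(W(i),c)$ to conclude that the annular shells $(U - W(i))_n$ are critical-point-free for all $n \ge n(i)$, and then builds the path from $y$ to $x$ as an infinite concatenation of short arcs, the $b$-th arc living entirely in the shell $\bar{W}(h+b) - W(h+b+1)$. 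Each such arc is drawn once at level $n(h+b+1)$ (avoiding $v$ if $v$ lies in that shell) and then lifted \emph{univalently} to all higher levels; its endpoint lands on $\partial \bar{W}(h+b+1)_n$ because a Riemann--Hurwitz count shows $W(h+b+1)$ has a single connected preimage component inside $V_n$. The limit of the concatenation is $x$. This annular decomposition, replacing a single global lift by infinitely many lifts in critical-free shells, is the idea your argument is missing.
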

\begin{proof} We are going to show that for any point $y\in V-\{v\}$, we can connect $y$ to $x$ by a path inside $V-\{v\}$. If $y=x$ this statement is obvious. Hence, we assume that $y\ne x$.
\\ \\
Let $c_1,...,c_{\kappa}$ be all the critical points of $f$, such that the signature of $x$ with respect to them is nontrivial. For any critical point $c$, by Corollary \ref{cor.finite}, there exists some neighborhood $U$ of $x$ in $S_{\infty}$, such that the equivalence class $ind(U,c)$ is equal to the maximal element of $sign(x,c)$. Since $f$ has a finite number of critical points, we can select $U$ is such a way that $ind(U,c)$ is the maximal element of $sign(x,c)$ for every critical point $c$ of $f$. Let $(W(1),W(2),...)$ be a sequence of neighborhoods of $x$ shrinking to $x$ such that $W(1)=V$ and $\bar{W}(i+1)\subset W(i)$ for all $i=1,2,...$. Let $h$ be the maximal positive integer, for which $\bar{W}(h)$ contains $y$. This number exists, because $y\ne x$ and $S_{\infty}$ is a Hausdorff space. For all $i=1,2,...$, let $n(i)$ be the minimal positive integer such that for all $n\ge n(i)$, $(U-W(i))_n$ does not contain any critical points of $f$. All these minimal integers $n(i)$ must exist, because $ind(U,c)=sign(x,c)$ for each critical point $c$, which implies that $ind(U,c)=ind(W(i),c)$, and $f$ has a finite number of critical points. Let $w(i)_{n(i)}$ be a point on the boundary of $\bar{W}(i)_{n(i)}$.
\\ \\
We connect $y_{n(h+1)}$ to $w(h+1)_{n(h+1)}$ by any path $p^h_{n(h+1)}:[0,\frac{1}{2}]\rightarrow (W(h)-W(h+1))_{n(h+1)}$. If $v$ is contained in $W(h)-W(h+1)$, we select this path $p^h_{n(h+1)}$ in such a way, that it avoids the point $v_{n(h+1)}$. Since $(W(h)-W(h+1))_n$, for all $n\ge n(h+1)$,  does not contain any critical points of $f$, the path $p^h_{n(h+1)}:[0,\frac{1}{2}]\rightarrow (W(h)-W(h+1))_{n(h+1)}$ has a unique lift to the path $p^h_{n}:[0,\frac{1}{2}]\rightarrow (W(h)-W(h+1))_{n}$ which connects the point $y_n$ to some point $w(h+1)_n$ in the boundary of $\bar{W}(h+1)_{n}$. This point $w(h+1)_n$ must belong to the boundary of $\bar{W}(h+1)_{n}$ because it follows from the Riemann–Hurwitz formula, similarly to how it was applied in the proof of Lemma \ref{lemma.neighborhood.nomax.3}, that $f^{n(h+1)-n}(W(h+1)_{n(h+1)})$ has only one connected component inside $V_n$, which is $W(h+1)_{n}$. Now, for all positive integers $n<n(h+1)$ we define $p^h_{n}=(f^{n(h+1)-n}\circ p^h_{n(h+1)}):[0,\frac{1}{2}]\rightarrow (W(h)-W(h+1))_{n}$. Thus, we obtain a path $p^h:[0,\frac{1}{2}]\rightarrow (W(h)-W(h+1))$, which connects $y$ to a point $w(h+1)$ in the boundary of $\bar{W}(h+1)$.
\\ \\
Next, we connect $w(h+1)_{n(h+2)}$ to $w(h+2)_{n(h+2)}$ by any path $p^{h+1}_{n(h+2)}:[\frac{1}{2},\frac{3}{4}]\rightarrow (\bar{W}(h+1)-W(h+2))_{n(h+2)}$. If $v$ is contained in $\bar{W}(h+1)-W(h+2)$, we select this path $p^{h+1}_{n(h+2)}$ in such a way, that it avoids the point $v_{n(h+2)}$. Since $(\bar{W}(h+1)-W(h+2))_n$, for all $n\ge n(h+2)$, does not contain any critical points of $f$, the path $p^{h+1}_{n(h+2)}:[\frac{1}{2},\frac{3}{4}]\rightarrow (\bar{W}(h+1)-W(h+2))_{n(h+2)}$ has a unique lift $p^{h+1}_{n}:[\frac{1}{2},\frac{3}{4}]\rightarrow (\bar{W}(h+1)-W(h+2))_{n}$ which connects the point $w(h+1)_n$ to some point $w(h+2)_n$ in the boundary of $\bar{W}(h+2)_{n}$. Now, for all positive integers $n<n(h+2)$ we define $p^{h+1}_{n}=(f^{n(h+2)-n}\circ p^{h+1}_{n(h+2)}):[\frac{1}{2},\frac{3}{4}]\rightarrow (\bar{W}(h+1)-W(h+2))_{n}$. Thus, we obtain a path $p^{h+1}:[\frac{1}{2},\frac{3}{4}]\rightarrow (\bar{W}(h+1)-W(h+2))$, which connects $w(h+1)$ to a point $w(h+2)$ in the boundary of $\bar{W}(h+2)$.
\\ \\
Next, we connect $w(h+2)_{n(h+3)}$ to $w(h+3)_{n(h+3)}$ by any path $p^{h+2}_{n(h+3)}:[\frac{3}{4},\frac{7}{8}]\rightarrow (\bar{W}(h+2)-W(h+3))_{n(h+3)}$. If $v$ is contained in $\bar{W}(h+2)-W(h+3)$, we select this path $p^{h+1}_{n(h+3)}$ in such a way, that it avoids the point $v_{n(h+3)}$. Since $(\bar{W}(h+2)-W(h+3))_n$, for all $n\ge n(h+3)$, does not contain any critical points of $f$, the path $p^{h+2}_{n(h+3)}:[\frac{3}{4},\frac{7}{8}]\rightarrow (\bar{W}(h+2)-W(h+3))_{n(h+3)}$ has a unique lift $p^{h+2}_{n}:[\frac{3}{4},\frac{7}{8}]\rightarrow (\bar{W}(h+2)-W(h+3))_{n}$ which connects the point $w(h+2)_n$ to some point $w(h+3)_n$ in the boundary of $\bar{W}(h+3)_{n}$. For all positive integers $n<n(h+3)$ we define $p^{h+2}_{n}=(f^{n(h+3)-n}\circ p^{h+2}_{n(h+3)}):[\frac{3}{4},\frac{7}{8}]\rightarrow (\bar{W}(h+2)-W(h+3))_{n}$. Thus, we obtain a path $p^{h+2}:[\frac{3}{4},\frac{7}{8}]\rightarrow (\bar{W}(h+2)-W(h+3))$, which connects $w(h+2)$ to a point $w(h+3)$ in the boundary of $\bar{W}(h+3)$.
\\ \\
Continuing this way, we, for all $b=1,2,...$, construct paths $p^{h+b}:[1-(\frac{1}{2})^b,1-(\frac{1}{2})^{b+1}]\rightarrow (\bar{W}(h+b)-W(h+b+1))$, which connect the point $w(h+b)$ in the boundary of $\bar{W}(h+b)$ to the point $w(h+b+1)$ in the boundary of $\bar{W}(h+b+1)$. By our construction, all these paths avoid the point $v$. Finally, we define the path $p:[0,1]\rightarrow V-\{v\}$ by:
\begin{itemize}
\item For $0\le t < \frac{1}{2}$ set $p(t)=p^h(t)$;
\item For each $b=1,2,...,$, for $1-(\frac{1}{2})^b\le t< 1-(\frac{1}{2})^{b+1}$ set $p(t)=p^h(t)$;
\item For $t=1$ set $p(1)=x$.
\end{itemize}
It is straightforward to verify, that the path $p$ is contained inside $V$, connects $y$ and $x$, and avoids $v$.
\end{proof}
\section{Signatures $-$ Parabolic and Cremer cycle cases}
It has been established in Theorem 33 of \cite{CCG} that the invariant lifts of attracting and super-attracting cycles to $S_{\infty}$ are irregular points, whose signature with respect to any critical point has a maximal element. Likewise, it was established there, that the invariant lift of a parabolic cycle is an irregular point, which, with respect to some critical point, has a nontrivial signature with a maximal element.
\\ \\
In this section we show, that for some cases of the invariant lift of the parabolic cycle, its signature with respect to some critical point might have no maximal element. Furthermore, we give a necessary condition for the invariant lift to have only signatures with maximal elements. Additionally, we investigate signatures of points, belonging to the invariant lift of the boundary of immediate basin of attraction of certain parabolic cycles. We also show, that the invariant lift of the Cremer cycle always has a signature with no maximal element with respect to some critical point.
\\ \\
Whenever a function $f:S_0\rightarrow S_0$ have a cycle of period $n$ of a certain type, the function $f^n:S_0\rightarrow S_0$ has $n$ fixed points of the same type. Thus, we will investigate here the cases of parabolic and Cremer fixed points, but our results apply in the general situation of cycles of period $n$.
\\ \\
Let $f:S_0\rightarrow S_0$ have a parabolic fixed point $x_0\in S_0$ and let $x=(x_1=x_0,x_2=x_0,...)$ be the invariant lift of $x_0$ to $S_{\infty}$. Assume, that the boundary $\delta B$ of the immediate basin of attraction $B$ of $x_0$ does not contain any critical points of $f$. Then it follows from the Leau-Fatou flower theorem, that there exists a ``small-enough" open neighborhood $U_1$ of $x_1$, such that for any open neighborhood $V_\subset U_1$ of $x_1$ and any point $y_0\in S_0=S_1=S_2=...$:
\begin{itemize}
\item If $y_0\notin \bar{B}$, then there exists an integer $n$ such that $y_0\notin V_i$ for all $i\ge n$;
\item If $y_0\in B$, then there exists an integer $n$ such that $y_0\in V_i$ for all $i\ge n$.
\end{itemize}
Here $V_i$ is the pullback of $V_1$ along $x$. Thus, we obtain the following Lemma:
\begin{lem} \label{parab.max} If the boundary $\delta B$ of the immediate basin of attraction $B$ of a parabolic fixed point $x_0$ does not contain a critical point then all the signatures of the invariant lift $x$ of $x_0$ have maximal elements.
\end{lem}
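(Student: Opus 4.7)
The plan is to leverage directly the Leau--Fatou dichotomy stated just above the lemma: for the ``small-enough'' plaque $U$ whose first level is $U_1$, any plaque $V\subset U$ of $x$ has the property that a point $y_0\in S_0$ is either eventually never in $V_i$ (when $y_0\notin\bar{B}$) or eventually always in $V_i$ (when $y_0\in B$). Since by hypothesis $\delta B$ contains no critical points, every critical point $c$ of $f$ falls into exactly one of these two alternatives, and in each case $ind(V,c)$ will turn out to be independent of $V$ as an element of $I$. By Corollary \ref{cor.finite} (or directly from Definition \ref{oldwork.definsignature}), this will immediately force $sign(x,c)$ to have a maximal element.

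First I would fix an arbitrary critical point $c$ with $c\notin\bar{B}$ and any shrinking sequence $(V(j))_{j=1}^{\infty}$ of plaque neighborhoods of $x$ inside $U$. The first bullet produces, for each $j$, an integer $n$ with $c\notin V(j)_i$ for every $i\geq n$. Hence the binary sequence defining $ind(V(j),c)$ has only finitely many $1$'s, so $ind(V(j),c)=\mathbf{0}$ in $I$, independently of $j$. Consequently $\alpha(ind(V(j),c))=\{\mathbf{0}\}$ for every $j$, and $sign(x,c)=\{\mathbf{0}\}=\alpha(\mathbf{0})$, whose maximal element is $\mathbf{0}$.

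Next I would handle the remaining case $c\in B$ in the same way, using the second bullet: for each $j$ there is an integer $n$ with $c\in V(j)_i$ for every $i\geq n$. Now the defining binary sequence has only finitely many $0$'s, so $ind(V(j),c)=\mathbf{1}$ in $I$, again independently of $j$. Therefore $\alpha(ind(V(j),c))=I$ for every $j$, and the intersection defining $sign(x,c)$ collapses to $sign(x,c)=I=\alpha(\mathbf{1})$, whose maximal element is $\mathbf{1}$.

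Because these two cases exhaust the possibilities for $c$ (the otherwise available third case $c\in\delta B$ is ruled out by the standing assumption), we conclude that $sign(x,c)$ has a maximal element for every critical point $c$ of $f$, which is the statement of the lemma. I do not expect any genuine obstacle: the content of the argument has been essentially offloaded onto the Leau--Fatou dichotomy supplied by the authors, and what remains is the routine translation of ``eventually in'' and ``eventually out of'' into the language of binary-sequence equivalence classes in $I$. The only point deserving a brief verification is that the dichotomy applies to the pullbacks $V(j)_i$ along the invariant lift $x$ rather than along some other branch, and this is automatic since $x_i=x_0$ for all $i$.
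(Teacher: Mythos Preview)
Your proposal is correct and follows essentially the same approach as the paper's own proof: split the critical points into those in $B$ and those outside $\bar{B}$, apply the Leau--Fatou dichotomy stated immediately before the lemma to conclude that $ind(V,c)$ equals $\mathbf{1}$ or $\mathbf{0}$ respectively for every small neighborhood $V$, and read off the signature. Your write-up is simply a more detailed and notationally careful version of the paper's two-sentence argument.
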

\begin{proof}
For the critical points $c$, contained in $B$, we will have $ind(V,c)=[1,1,1,...]$ for all ``small enough" neighborhoods $V$ of $x$. Thus, $sign(x,c)=[1,1,1,...]$. For the critical points $c$, not contained in $\bar{B}$, we will have $ind(V,c)=[0,0,0,...]$ for all ``small enough" neighborhoods $V$ of $x$. Thus, $sign(x,c)=[0,0,0,...]$.
\end{proof}
\begin{lem} \label{parab.max.2}  If the boundary $\delta B$ does contain some critical points, but they are all pre-periodic, then all the signatures of the invariant lift $x$ of $x_0$ have maximal elements.
\end{lem}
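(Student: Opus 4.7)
The plan is to classify each critical point $c$ of $f$ by its position relative to $B$ and to compute $sign(x,c)$ directly in each case. For $c\in B$ or $c\notin\bar B$, the proof of Lemma~\ref{parab.max} applies verbatim: the Leau-Fatou dichotomy stated just before that lemma gives $ind(V,c)=[1,1,\ldots]$ or $[0,0,\ldots]$ for all sufficiently small plaques $V$ at $x$, so $sign(x,c)$ equals $\alpha([1,1,\ldots])$ or $\alpha([0,0,\ldots])$, each with a maximal element. The new work concerns critical points $c\in\delta B$, which by hypothesis are pre-periodic.

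For such a $c$ the orbit $O(c)=\{c,f(c),f^2(c),\ldots\}$ is a finite subset of $\delta B$, using forward-invariance of $\delta B$. Let $C$ denote its eventual cycle. Since $x_0$ is fixed, $x_0\in O(c)$ if and only if $C=\{x_0\}$, and I split into two sub-cases. If $x_0\notin O(c)$, I take $V_1$ to be a neighborhood of $x_0$ disjoint from the finite set $O(c)$; then for every $n\geq 1$, $f^{n-1}(c)\in O(c)$ fails to lie in $V_1$, so $c\notin f^{-(n-1)}(V_1)\supset V_n$, giving $ind(V,c)=[0,0,\ldots]$, which has a maximal element. If instead $C=\{x_0\}$, write $f^p(c)=x_0$ for the smallest $p\geq 1$ and take $V_1$ disjoint from the finite set $\{c,f(c),\ldots,f^{p-1}(c)\}$. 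For $1\leq n\leq p$, $f^{n-1}(c)\notin V_1$ forces $c\notin V_n$; for $n\geq p+1$, we have $f^{n-1}(c)=x_0\in V_1$, so $c\in f^{-(n-1)}(V_1)$, and the crux is to show that $c$ lies in a connected component of $f^{-(n-1)}(V_1)$ different from the central component $V_n$.

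To establish this I would invoke the plaque structure at $x$: because $x_0$ is not a critical point of $f$ (its multiplier is a root of unity, not zero), the plaque definition forces the exponent $t$ in the identification $f|V_{i+1}\sim z^t$ to equal $1$ at every level, as soon as $V_{i+1}$ contains no critical point of $f$. For a sufficiently small plaque this condition holds at every relevant level, and consequently $f^{n-1}:V_n\to V_1$ is a conformal equivalence, so the only pre-image of $x_0$ inside $V_n$ is $x_0$ itself, forcing $c\notin V_n$. The main obstacle will be sustaining this degree-$1$ regime for all $n$ simultaneously, because the central pullback $V_n$ extends further into $B$ as $n\to\infty$ and, if $B$ contains a critical point of $f$, must eventually meet it, forcing $t>1$ at some high level. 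I expect to handle this by working with a shrinking sequence of plaques $V^{(j)}$ for which the degree-$1$ regime persists up to some level $n\leq N(j)$ with $N(j)\to\infty$, and then applying Corollary~\ref{cor.finite} to collapse $\bigcap_j\alpha(ind(V^{(j)},c))$ to $\alpha([0,0,\ldots])$, which has a maximal element.
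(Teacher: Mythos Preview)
Your treatment of the cases $c\in B$, $c\notin\bar B$, and $c\in\delta B$ with eventual cycle $C\neq\{x_0\}$ is correct and matches the paper. The problem is the remaining sub-case $C=\{x_0\}$, i.e.\ $f^p(c)=x_0$: here you are heading toward the \emph{wrong} value of the signature. You aim to show $sign(x,c)=\alpha([0,0,\ldots])$, but the paper shows $sign(x,c)=\alpha([1,1,\ldots])$.

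The flaw is in your proposed workaround. Knowing that the degree-$1$ regime persists for $n\le N(j)$ only tells you that positions $1,\ldots,N(j)$ of the binary sequence underlying $ind(V^{(j)},c)$ are $0$. But $ind(V^{(j)},c)$ lives in $I$, the Boolean algebra of sequences \emph{modulo finite differences}, so a finite initial block of zeros is invisible: it contributes nothing toward making $ind(V^{(j)},c)$ small, and Corollary~\ref{cor.finite} cannot collapse the intersection to $\alpha([0,0,\ldots])$ unless each $ind(V^{(j)},c)$ actually has only finitely many $1$'s. You have no control over positions $n>N(j)$, and in fact that is exactly where the index fills up with $1$'s.

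The mechanism you are missing is the one the paper uses: the parabolic basin $B$ always contains an interior critical point, and by the Leau--Fatou dichotomy (stated before Lemma~\ref{parab.max}) every such critical point lies in $V_n$ for all large $n$. Once $V_n$ contains all the critical points interior to $B$, the branched covering $f:V_{n+1}\to V_n$ has, restricted to $\bar B$, the full degree of $f|_{\bar B}$; hence $V_{n+1}$ already contains \emph{every} preimage of $x_0$ lying on $\delta B$. Iterating, $V_{n+m}$ contains all $m$-th preimages of $x_0$ on $\delta B$, so $c\in V_{n+p},V_{n+p+1},\ldots$, giving $ind(V,c)=[1,1,\ldots]$ for every plaque $V$, no matter how small. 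Shrinking $V^{(j)}$ only pushes the threshold $n$ further out; it never prevents the tail of $1$'s from appearing.
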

\begin{proof} Let $c\in \delta B$ be pre-periodic. Thus, for some integer $m>0$, $f^m(c)$ belongs to some cycle, while $f^{m-1}(c)$ does not. If $f^m(c)\ne x_0$ then we can take a small enough neighborhood of $x_0$, so that none of the points of the cycle which contains $f^m(c)$ are contained in that neighborhood. Clearly, all the pre-images of that neighborhood along $x$ will not contain $c$. Suppose that $f^m(c)=x_0$. For any neighborhood $U$ of $x$ all the levels of $U$, starting from some $U_n$ and up, will contain all the inner critical points of $B$. Hence, $U_n$ will contain all the pre-images of $x_0$ in $\delta B$. And $U_{n+1}$ will contain all the pre-images of all these pre-images in $\delta B$. And so on. Thus, $c$ will be contained in all the sets $U_{n+m},U_{n+m+1},U_{n+m+2},...$, which implies that $sign(x,c)=[1,1,1,...]$.
\end{proof}
Assume now that a critical point $c$, contained in the boundary of the immediate basin of attraction of a parabolic cycle, is not pre-periodic. From Theorem 1 of \cite{RoeschYin} it follows that for a polynomial function $f(z)$ the boundary $\delta B$ of a connected component $B$ of the immediate basin of attraction of a parabolic cycle is a Jordan curve. Passing from $f(z)$ to some finite iterate of $f(z)$, permits us to assume that $f(B)=B$. Thus, the Riemann conformal homeomorphism $\phi:B\rightarrow \mathbb{D}$ from $B$ onto the open unit disk, which conjugates $f:B\rightarrow B$ with some finite Blaschke product $\beta:\mathbb{D}\rightarrow\mathbb{D}$, can be extended to a continuous homeomorphism $\phi:\bar{B}\rightarrow \bar{\mathbb{D}}$, which also conjugates $f$ and $\beta$. Furthermore, $\phi$ can be selected in such a way, that it takes the parabolic point to $+1$. Then $+1\in \mathbb{D}$ will be the parabolic Denjoy-Wolff point of $\beta$ (see page 2 in \cite{Poggi}). 
\begin{lem} \label{Blaschke.boundary} Let $\beta:\mathbb{D}\rightarrow\mathbb{D}$ be a Blaschke product of a finite degree $d$ with a Denjoy-Wolff point at $+1$. Then there exists a continuous automorphism $\varphi:S^{1}\rightarrow:S^{1}$, which conjugates $\beta$ and $z\mapsto z^d$ on the boundary of $\mathbb{D}$, and $\varphi(+1)=+1$.
\end{lem}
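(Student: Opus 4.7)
The plan is to construct $\varphi$ via a Markov-partition matching of the iterated preimages of $+1$ under the two maps. The restriction $\beta|_{S^{1}}$ is an orientation-preserving $C^{\infty}$ degree-$d$ covering map of $S^{1}$: all critical points of $\beta$ lie inside $\mathbb{D}$, so there are none on $S^{1}$. The point $+1$ is a common fixed point of $\beta|_{S^{1}}$ and $m_{d}(z):=z^{d}$, so both maps have the same combinatorial structure as degree-$d$ self-covers of $S^{1}$ with a fixed basepoint. For each $n\geq 0$ I set $P_{n}:=(\beta|_{S^{1}})^{-n}(+1)$ and $Q_{n}:=m_{d}^{-n}(+1)$; each has $d^{n}$ points, and I need $P_{\infty}:=\bigcup_{n}P_{n}$ and $Q_{\infty}:=\bigcup_{n}Q_{n}$ to be dense in $S^{1}$. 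Density of $Q_{\infty}$ is immediate. For $P_{\infty}$, the Schwarz--Pick inequality for a finite Blaschke product of degree $\geq 2$ with parabolic boundary fixed point forces $|\beta'(z)|>1$ for $z\in S^{1}\setminus\{+1\}$; off $+1$ the map is therefore expanding, so backward orbits proliferate and cannot accumulate on a proper closed subset, while near $+1$ the parabolic Leau--Fatou flower supplies backward iterates approaching $+1$ from both sides.

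Next I would define $\varphi:Q_{\infty}\to P_{\infty}$ inductively. First, the $d$ points of $Q_{1}$ (the $d$-th roots of unity) are sent bijectively to the $d$ points of $P_{1}$, preserving cyclic order and sending $+1$ to $+1$. Given $\varphi$ on $Q_{n}$, for every $q\in Q_{n}$ the $d$ preimages of $q$ under $m_{d}$, listed in cyclic order on $S^{1}$, are matched bijectively in cyclic order with the $d$ preimages of $\varphi(q)$ under $\beta|_{S^{1}}$; this is possible because both maps are orientation-preserving degree-$d$ covers with no critical points on $S^{1}$. The extension is consistent, order-preserving on $Q_{\infty}$, and by construction satisfies $\varphi\circ m_{d}=\beta\circ\varphi$ on $Q_{\infty}$.

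Because the complementary arcs of $P_{n}$ in $S^{1}$ shrink uniformly in diameter to $0$ as $n\to\infty$ (exponentially off $+1$ by expansion, and at the parabolic rate $\sim 1/n$ near $+1$), and similarly for the complementary arcs of $Q_{n}$, the order-preserving map $\varphi$ on the dense set $Q_{\infty}$ extends uniquely to a continuous orientation-preserving bijection $\varphi:S^{1}\to S^{1}$ with $\varphi(+1)=+1$. The conjugacy relation $\varphi\circ m_{d}=\beta\circ\varphi$ then extends from $Q_{\infty}$ to all of $S^{1}$ by continuity.

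The main obstacle will be the diameter-shrinkage of the preimage partitions near the parabolic point $+1$, where the standard Shub-style argument for expanding circle maps does not directly apply: one must invoke the parabolic Leau--Fatou geometry of $\beta$ to show that the arc of $S^{1}\setminus P_{n}$ touching $+1$ on each side actually collapses to $\{+1\}$ at a definite (parabolic) rate, so that the monotone assignment on $Q_{\infty}$ really extends to a homeomorphism rather than a map with a jump at $+1$. Everything else in the construction is a routine bookkeeping exercise on inverse branches.
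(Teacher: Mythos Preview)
Your strategy is the same as the paper's: both build the conjugacy by matching, in cyclic order, the iterated $\beta$-preimages of $+1$ with the $d^{n}$-th roots of unity, and then extend from this dense set to all of $S^{1}$ by continuity. The paper constructs $\varphi^{-1}$ where you construct $\varphi$, but that is only a convention.

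The difference, and the place where your argument is shakier, is the justification that the complementary arcs of $P_{n}$ shrink to zero. You appeal to ``Schwarz--Pick'' to get $|\beta'(z)|>1$ on $S^{1}\setminus\{+1\}$, and then propose a separate Leau--Fatou analysis near the parabolic point. The Schwarz--Pick lemma, however, is an interior statement and does not by itself yield pointwise lower bounds on $|\beta'|$ at generic boundary points; the inequality $|\beta'|>1$ off the Denjoy--Wolff point is not an immediate consequence of it (indeed, for Blaschke products whose Denjoy--Wolff point lies in $\mathbb{D}$ one can have $|\beta'|<1$ at boundary points, so some genuine use of the parabolic hypothesis is required). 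The paper sidesteps this entirely: it simply observes that for a finite Blaschke product of degree $\geq 2$ the Julia set is all of $S^{1}$, so the forward iterates of any open arc must cover $S^{1}$ (minus at most an exceptional point or two). If some complementary arc of $P_{n}$ failed to shrink, its forward images would stay inside a proper subarc, contradicting this. That single observation handles the parabolic point and the rest of the circle uniformly, so your ``main obstacle'' evaporates.

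In short: your outline is correct and matches the paper's, but your metric-expansion justification has a gap that the paper's Julia-set argument avoids cleanly.
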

\begin{proof} Recall, that a finite Blaschke product of degree $d$ is of the form 
$$\beta(z)=\frac{a_dz^d+...+a_0}{\bar{a_0}z^d+...+\bar{a_d}}=e^{2\pi\vartheta\cdot i}\cdot \prod\limits_{t=1}^d \frac{z-b_t}{1-\bar{b_t}z},$$
where all the solutions $b_1,...,b_d$ of the equation $\beta(z)=0$ are contained inside $\mathbb{D}$. By Gauss-Lucas theorem (see \cite{Marden} - Theorem 6.1), all the critical points of $\beta:\bar{\mathbb{D}}\rightarrow \bar{\mathbb{D}}$ are contained inside the convex hull of its zeros $b_1,...,b_d$, which is contained inside $\mathbb{D}$. Thus, $\beta:S^{1}\rightarrow:S^{1}$ is a $d$-fold covering map with no branching points. Moreover, it is easy to see that, since a point $z$ in the Riemann sphere has $d$ pre-images under $\beta^{-1}$ counting them with multiplicities, $\beta^{-1}(S^{1})=S^{1}$. Since $S^{1}$ belongs to the Julia set of $\beta$, this implies, that for any open angular arc $(e^{2\pi(\theta)\cdot i},e^{2\pi(\theta+\epsilon)\cdot i})\subset S^{1}$, the union of its images under the forward iterations of $\beta$ will cover the entire $S^{1}$ except, maybe, one or two points.
\\ \\
Set $\varphi^{-1}(e^{2\pi(\frac{j}{d})\cdot i})$, for $j=0,...,d-1$, to be the $d$ solutions of $\beta(z)=+1$, taken, starting with $+1$, counterclockwise. Clearly, for each $j=1,...,d-1$, the equation $\beta(z)=\varphi^{-1}(e^{2\pi(\frac{j}{d})\cdot i})$ has $d$ different solutions, each belonging to the different open arc $(\varphi^{-1}(e^{2\pi(\frac{t}{d})\cdot i}),\varphi^{-1}(e^{2\pi(\frac{t+1}{d})\cdot i})$, where $t=0,...,d-1$. Indeed, each open arc is mapped by $\beta$ onto $S^{1}-\{+1\}$. Set $\varphi^{-1}(e^{2\pi(\frac{j_1d+j_2}{d^2})\cdot i})$, for $j_1=0,...,d-1$ and $j_2=1,...,d_1$, to be the solution of $\beta(z)=\varphi^{-1}(e^{2\pi(\frac{j_2}{d})\cdot i})$ contained in the arc $(\varphi^{-1}(e^{2\pi(\frac{j_1}{d})\cdot i}),\varphi^{-1}(e^{2\pi(\frac{j_1+1}{d})\cdot i})$. It is easy to check, that $\beta^2$ takes each open arc $(\varphi^{-1}(e^{2\pi(\frac{j_1d+j_2}{d^2}\cdot i}),\varphi^{-1}(e^{2\pi(\frac{j_1d+j_2+1}{d^2})\cdot i})$, for $j_1,j_2=0,...,d-1$, onto $S^{1}-\{+1\}$. We proceed this way to define $\varphi^{-1}$ for all $e^{2\pi t\cdot i}$ with the the rational numbers $t$ of the form $\frac{j_1d^{n-1}+j_2d^{n-2}+...+j_n}{d^n}$ for all $n=1,2,...$, where $j_1,...,j_n=0,...,d-1$. The continuity of this map follows from the fact, that both $\varphi^{-1}(e^{2\pi(t+\frac{1}{d^n})\cdot i})$ and $\varphi^{-1}(e^{2\pi(t-\frac{1}{d^n})\cdot i})=\varphi^{-1}(e^{2\pi(t-1+\frac{(d-1)d^{n-1}+(d-1)^d{n-1}+...+(d-1)}{d^n})\cdot i})$, for any rational number $t$ of the above described form, are converging to $\varphi^{-1}(e^{2\pi t\cdot i})$ as $n\rightarrow\infty$. Indeed, if $\varphi^{-1}(e^{2\pi(t+\frac{1}{d^n})\cdot i})$ converges to some $e^{2\pi(t+\epsilon)\cdot i}$ we would obtain an arc $(e^{2\pi(t)\cdot i},e^{2\pi(t+\frac{1}{2}\epsilon)\cdot i})\subset S^{1}$ such that the union of all its forward iterates does not contain some open arc in $S^{1}$.
\\ \\
Next, we extend $\varphi^{-1}$, using continuity, to all $e^{2\pi(t)\cdot i}$ for any real $0\le t\le 1$. By its construction, $\varphi^{-1}$ conjugates $z\mapsto z^d$ and $\beta$ at all $d^n$ roots of $+1$ for all $n=0,1,2,...$. Thus, $\varphi$ conjugates $\beta$ with $z\mapsto z^d$ for all the points of $S^1$.
\end{proof}
Thus, to compute the signature of the invariant lift $x$ of the parabolic fixed point $x_0$, we need to select any sequence of decreasing positive numbers $(\epsilon_1,\epsilon_2,...)$, which converges to $0$, and define $b_t\in I$, for $t=1,2,...$, as the class the binary sequence which contains $1$ in its $q^{th}$ place if and only if $|\varphi(\phi(c))^{d^q}-1|<\epsilon_t$. If $\varphi(\phi(c))=e^{2\pi(\theta)\cdot i}$ for some irrational $\theta$, then $q$ are all integers, for which the inequality $|d^q\cdot\theta-p|<\epsilon_t$, for some integer $p$, has solutions. Now one applies the arguments, similar to the ones, used in Lemma 37 of \cite{CCG}, to show that $sign(x,c)$ has no maximal element.
\\ \\
Currently, we do not know any examples of critical points on the boundary of the immediate basin of attraction of a parabolic cycle. However, we do not know any argument, why such a point cannot exist or, if it does exist, must be pre-periodic in a polynomial case. Note, that such a non-pre-periodic critical point will be recurrent, since the forward orbits of $z=\varphi(\phi(c))\in S^1$ under $z\mapsto z^d$ will contain $z$ in their $\omega$-limit.
\\ \\
Now we address a generic irregular point $x=(x_1,x_2,...)\in S_{\infty}$, which is not an invariant lift of a super-attracting, an attracting or a parabolic cycle. Recall Lemma 3.5 from \cite{LM}, which can be viewed as a stronger version of Ma$\tilde{\textrm{n}}\acute{\textrm{e}}$'s Theorem II (b) from \cite{Mane}. Here we state and prove a slightly stronger version of these two results. Just like in the proof of Ma$\tilde{\textrm{n}}\acute{\textrm{e}}$, on page 2 of \cite{Mane}, we too can assume that $f(\infty)=\infty$ and that $x_1$ has a neighborhood, which does not contain $\infty$. Thus, we can deal only with subsets of the complex plane. Denote by $disk(z,r)$ an open disk of radius $r$ around a point $z$ in the complex plane.
\begin{thm}\label{Mane.LM} There exists a critical point $c$, such that for any open neighborhood $\Phi=(\Phi_1,\Phi_2,...)\subset S_{\infty}$ of $x$ there exist infinitely many positive numbers $n_1<n_2<...$, so that $\Phi$ contains $c$ in all of its levels $\Phi_{n_i}$, and for every $i=1,2,...$, $c$ is a limit point of the set $\{f^{n_j-n_i}(c)\;\; |\;\; j=i+1,i+2,...\}$.
\end{thm}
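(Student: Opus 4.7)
The plan is to combine the signature characterization of irregular points with a Mañé-style dichotomy, applied uniformly to the finitely many critical points with nontrivial signature at $x$. By Lemma \ref{oldwork.signatureregular}, since $x$ is irregular, the set $C=\{c:sign(x,c)\neq\{\mathbf{0}\}\}$ is a finite nonempty collection of critical points of $f$. For each $c\in C$ and every sufficiently small plaque neighborhood $\Phi$ of $x$, the set $N_\Phi(c)=\{n:c\in\Phi_n\}$ is infinite, and whenever $m<n$ both lie in $N_\Phi(c)$ the relation $f^{n-m}(\Phi_n)=\Phi_m$ forces $f^{n-m}(c)\in\Phi_m$. So the ``$c\in\Phi_{n_i}$'' half of the conclusion is immediate for any $c\in C$; the substantive content is the accumulation clause.

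I would argue by contradiction: suppose every $c\in C$ fails the conclusion. Then for each $c\in C$ there is a plaque $\Phi^c$ of $x$, an integer $i_c\ge 1$, and an open disk $\Omega^c\subset S_0$ around $c$ such that, enumerating $N_{\Phi^c}(c)$ as $n_1<n_2<\ldots$, only finitely many of the iterates $f^{n_j-n_{i_c}}(c)$ lie in $\Omega^c$. Intersecting the finitely many $\Phi^c$ and shrinking further yields one plaque $\Phi$ witnessing failure for every $c\in C$ simultaneously. Within such $\Phi$, compactness of $\overline{\Phi_{n_i}}\setminus\Omega^c$ together with a diagonal extraction produces accumulation points $y^{(i)}(c)\in\overline{\Phi_{n_i}}\setminus\Omega^c$ at every level that fit into a coherent sequence $y(c)=(y^{(1)}(c),y^{(2)}(c),\ldots)$ in the Tychonoff inverse limit. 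Since $y^{(i)}(c)\in\overline{\Phi_{n_i}}$ and $\Phi$ may be taken arbitrarily small, Lemma \ref{Tychonoff} pins $y(c)$ down to $x$ on every coordinate; the post-critical set of each $c\in C$ therefore accumulates on the orbit of $x$ while staying uniformly bounded away from $c$ itself.

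This last combinatorial conclusion is contradicted via the classical Mañé/Lyubich-Minsky dichotomy together with the Fatou classification: if the forward orbit of a critical point $c$ accumulates on $x_1$ without clustering at $c$, then $c$ lies in the Fatou set and its forward orbit is trapped in a super-attracting, attracting or parabolic basin whose associated cycle passes through $x_1$; coherence of the lift $x$ along the inverse system then forces $x$ to be the invariant lift of that cycle, contradicting the hypothesis. The main obstacle is this last step: translating the combinatorial non-accumulation supplied by the plaque framework into a dynamical classification of the Fatou component containing the orbit of $c$, and doing so uniformly across the finite set $C$. A secondary technical issue is ensuring that the diagonal extraction across successive shrinking $\Phi$ genuinely produces a point of the inverse limit identifiable with $x$ via Lemma \ref{Tychonoff}; this is handled by a Cantor-style refinement of the index sequences $N_\Phi(c)$ as $\Phi$ shrinks.
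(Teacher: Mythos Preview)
Your proposal has a genuine gap precisely at the step you flag as ``the main obstacle.'' The dichotomy you invoke is not a black-box consequence of classical Ma\~n\'e or of Lyubich--Minsky's Lemma~3.5; it is essentially the theorem itself. Concretely: under your negation hypothesis you deduce that every $c\in C$ is non-recurrent and that the forward orbit of each such $c$ accumulates on $x_1$. You then assert that this forces $c$ into the Fatou set with orbit converging to an attracting, super-attracting, or parabolic cycle through $x_1$. That assertion is false as stated (a Misiurewicz critical point lies in the Julia set, is non-recurrent, and its orbit accumulates on a repelling cycle), and even using the irregularity of $x$ does not rescue it. Classical Ma\~n\'e does guarantee, for an irregular $x$ not lifting such a cycle, a recurrent critical point $c'$ with $x_1\in\omega(c')$, but it gives no information about \emph{which} connected component of $f^{-n}(\Phi_1)$ the point $c'$ sits in. There is no reason a priori that $c'$ should belong to your set $C$, so showing that all $c\in C$ are non-recurrent does not contradict Ma\~n\'e's conclusion, and your argument does not close. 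The paper confronts this directly by reopening Ma\~n\'e's proof and redefining $c(U,n)$ to consist only of those connected components of $f^{-n}(U)$ contained in $\Phi_n$; the analogue of Lemma~2 of \cite{Mane} must then be re-proved for this restricted family of pull-backs, and that is where the actual work lies.

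There is also a secondary problem with your diagonal extraction. Lemma~\ref{Tychonoff} concerns a sequence of points of $S_\infty$ whose coordinates eventually agree with those of a fixed $v\in S_\infty$; you instead produce accumulation points $y^{(i)}(c)$ of iterates of $c$ in the single surfaces $S_{n_i}$, indexed by the sparse sequence $n_1<n_2<\ldots$ rather than by all levels. These do not assemble into a point of the inverse limit, so Lemma~\ref{Tychonoff} is not applicable. Even if one grants the intended conclusion---that the forward orbit of $c$ accumulates on every $x_n$ while avoiding a fixed neighborhood of $c$---this discards the essential structural information that $c$ lies in the \emph{specific} component $\Phi_{n_j}$, which is exactly what the plaque-adapted version of Ma\~n\'e's argument in the paper exploits.
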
 
\begin{proof}
Assume that the theorem is wrong. Then exists some $\delta_0>0$ such that for any open neighborhood $\Phi$ there is no critical point $c$, for which exist integers $0<t_1\le t_2$ satisfying
$$c\in f^{-t_1}(disk(c,\delta_0))\cap \Phi_{t_1}$$
$$c\in f^{-t_2}(disk(x_1,\delta_0))\cap \Phi_{t_2}.$$
Comparing this to part 1) of Ma$\tilde{\textrm{n}}\acute{\textrm{e}}$'s proof on page 5 of \cite{Mane}, taking $\delta$ in part 6) of that proof to be small enough, that the closure of the entire square of center $x$ and radius $\delta+2\cdot\frac{\delta}{4}+2\cdot\frac{\delta}{16}+2\cdot\frac{\delta}{64}+...$ is contained inside $\Phi_1$, and redefining $c(U,n)$ on page 2 of \cite{Mane}, for all open $U\subset \Phi_1$, to be defined now as the set of all connected components of $f^{-n}(U)\cap \Phi_n$, permits us to repeat Ma$\tilde{\textrm{n}}\acute{\textrm{e}}$'s argument for our Theorem. The only part, which needs to be justified, is the modified version of Lemma 2 on page 7 of \cite{Mane}. Namely, that if $U\subset \Phi_1$ is an open neighborhood of $x_1$ and $V\in c(U,n)$ satisfies $diam(f^i(V))\le \delta_0$ for $0\le i\le n$ then $\Delta(V,n)\le N_0$.
\\ \\
Before we prove this modified version of Lemma 2, we make two very important notes. The first note is, that, both in Ma$\tilde{\textrm{n}}\acute{\textrm{e}}$'s proof and here, $\Delta(V,n)$ is the number of different points $z\in V$, for which $(f^n)'(z)=0$. The algebraic multiplicity of these points should not be counted. Since the distance between $x_1$ and any super-attracting periodic point of $f$ is greater than some fixed positive number and since $f$ has a finite degree, there exist a neighborhood $\Omega$ of $x_1$ and two fixed positive numbers $\tau_1$ and $\tau_2$ such that for any open set $U\subset \Omega$, any $n$, and any connected component $V$ of $f^{-n}(U)$,  $\tau_1\cdot \Delta(V,n)<\Delta'(V,n)<\tau_2\cdot \Delta(V,n)$, where $\Delta'(V,n)$ is the number of different points $z\in V$, for which $(f^n)'(z)=0$, counted with algebraic multiplicity $-$ the definition, given for $\Delta(V,n)$, on page 2 of Ma$\tilde{\textrm{n}}\acute{\textrm{e}}$'s work. Thus, $\Delta'(V,n)$ and $\Delta(V,n)$ are interchangeable in Lemma 1 and, consequently, in 6) on page 5 of \cite{Mane}. The second note is, that, both in Ma$\tilde{\textrm{n}}\acute{\textrm{e}}$'s work and here, $N_0$ should be defined as $k\cdot deg(f)^{k-1}$, where $k$ is the number of different critical points of $f$. Defining $N_0$ just as the number of different critical points of $f$ is not sufficient, since in the proof of Lemma 2 of \cite{Mane} the case $m_1=m_2$ yields $dist(f^0(c),c)\le \delta_0$, which does not contradict the requirement 1) from page 5 of \cite{Mane}. Obviously, that this change in the definition of the fixed number $N_0$ also causes the values of $N_1$ in 5) and of $\delta$ in 6) on page 5 of Ma$\tilde{\textrm{n}}\acute{\textrm{e}}$'s work to change.
\\ \\
If for two integers $0\le i_1<i_2\le n$ and two points $y'_{n+1},y''_{n+1}\in V$ we have $f^{i_1}(y'_{n+1})=f^{i_2}(y''_{n+1})=c$ for some critical point $c$ of $f$, then $c=f^{i_1}(y'_{n+1})$ is contained inside $$f^{i_1}(V)\subset f^{i_1-i_2}(f^{i_2-i_1}(V))\cap \Phi_{n+1-i_1}\subset f^{i_1-i_2}(disk(c,\delta_0))\cap \Phi_{n+1-i_1},$$ while that same $c=f^{i_2}(y''_{n+1})$ is contained inside $$f^{i_2}(V)\subset f^{i_2-n}(f^{n}(V))\cap \Phi_{n+1-i_2}\subset f^{i_2-n}(disk(x,\delta_0))\cap \Phi_{n+1-i_2},$$ which contradicts our choice of $\delta_0$. Thus, each critical point $c$ of $f$ can appear at most in one of the sets $V,f(V),f^2(V),...,f^n(V)$. Since $f$ has $k$ different critical points, and the branching number at each one of them is $\le deg(f)$, we get that at most $deg(f)^{k-1}$ different points $y_{n+1}\in V$ can have $f^i(y_{n+1})=c$ for some $0\le i\le n$. Since $f$ has $k$ different critical points, we see that at most $k\cdot deg(f)^{k-1}$ different points $y_{n+1}\in V$ can have $f^i(y_{n+1})$ equal to some critical point of $f$ for some $0\le i\le n$. But $(f^n)'(y_{n+1})=0$ if and only if $f^i(y_{n+1})$ is equal to some critical point of $f$ for some $0\le i\le n$. This completes the proof of the modified version of Lemma 2 and of our theorem.
\end{proof}
\begin{thm}\label{nonperiodic.signatures} Let $x=(x_1,x_2,...)\in S_{\infty}$ be an irregular point, which is not an invariant lift of a super-attracting, an attracting or a parabolic cycle. Then exists a recurrent critical point $c$, such that $sign(x,c)$ does not have a maximal element
\end{thm}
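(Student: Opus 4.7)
The plan is to apply Theorem \ref{Mane.LM} to obtain a recurrent critical point $c$, assume for contradiction that $sign(x,c)$ has a maximal element, and deduce that $x$ must be the invariant lift of a super-attracting cycle, contradicting the hypothesis. The critical point $c$ produced by Theorem \ref{Mane.LM} is recurrent because the condition that $c$ is a limit point of $\{f^{n_j-n_i}(c):j>i\}$ places $c$ in its own $\omega$-limit set. Under the maximality assumption, Corollary \ref{cor.finite} yields a plaque neighborhood $U$ of $x$ with $sign(x,c)=\alpha(ind(U,c))$, and for every plaque neighborhood $V\subset U$ of $x$ the set $F_V:=\{n:c\in U_n,\ c\notin V_n\}$ is finite; the set $A_U:=\{n:c\in U_n\}$ is infinite.

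Apply Theorem \ref{Mane.LM} with $\Phi=U$ to obtain indices $n_1<n_2<\ldots$ with $c\in U_{n_i}$ and $f^{n_j-n_i}(c)\to c$ as $j\to\infty$ for each $i$. The first key step is to prove $f^{n_i-1}(c)=x_1$ for every $i$. Otherwise, for some $i_0$, pick a plaque neighborhood $W\subset U$ of $x$ with $f^{n_{i_0}-1}(c)\notin\bar{W}_1$; continuity and the recurrence then give $f^{n_j-1}(c)=f^{n_{i_0}-1}(f^{n_j-n_{i_0}}(c))\to f^{n_{i_0}-1}(c)\notin\bar{W}_1$, so $c\notin W_{n_j}$ for $j$ large, placing infinitely many $n_j$ in $F_W$ and contradicting its finiteness. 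Applying $f^{n_{i+1}-n_i}$ to both sides of $f^{n_i-1}(c)=x_1$ shows $x_1$ is periodic of some minimal period $p$; hence $c$ is pre-periodic, and because $c$ is recurrent, $c$ must itself be periodic and lie on the cycle of $x_1$. Since $c$ is critical, $(f^p)'(c)=0$, so this cycle is super-attracting.

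To finish, pick a plaque neighborhood $V\subset U$ of $x$ whose first level $V_1$ is contained in the immediate basin Fatou component of $x_1$. Because the super-attracting basin is $f$-invariant and its Fatou components are pairwise disjoint, every connected component of $f^{-(n-1)}(V_1)$ lies entirely within one Fatou component; in particular, components containing on-cycle preimages of $x_1$ lie in immediate basin components, while those containing off-cycle preimages lie in pre-basin Fatou components, so the two families are disjoint. If $x$ were not the invariant lift of the super-attracting cycle, then, since preimages of off-cycle points remain off the cycle, there would be some $n_0$ with $x_n$ off the cycle for all $n\ge n_0$. For every $n\in A_U\cap[n_0,\infty)$ the component $V_n$ containing $x_n$ would lie in a pre-basin while $c$ lies on the cycle, so $c\notin V_n$ and $n\in F_V$. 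As $A_U\cap[n_0,\infty)$ is infinite, this would give $|F_V|=\infty$, contradicting the maximality assumption. Hence $x$ is the invariant lift of a super-attracting cycle, contradicting the hypothesis.

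The main obstacle is the last step: converting the pointwise information ``$f^{n_i-1}(c)=x_1$'' into the global conclusion that $x$ is the invariant lift. The key geometric ingredient is the pairwise disjointness of Fatou components of the super-attracting basin: once $V_1$ is squeezed into the immediate basin Fatou component of $x_1$, on-cycle and off-cycle preimages of $x_1$ are forced into disjoint connected components of $f^{-(n-1)}(V_1)$, which separates $c$ from $V_n$ whenever $x_n$ lies off the cycle.
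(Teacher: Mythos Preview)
Your proof is correct but takes a genuinely different route from the paper's. The paper argues directly: for any neighborhood $\Phi$ of $x$ it chooses $\Psi\subset\Phi$ with $c\notin\bar\Psi_{n_1}$ (using $x_{n_1}\neq c$, which the paper asserts without explanation), then uses the recurrence $f^{n_{j_k}-n_1}(c)\to c$ along a subsequence to get $c\notin\Psi_{n_{j_k}}$ for infinitely many $k$ while $c\in\Phi_{n_{j_k}}$, so $ind(\Psi,c)<ind(\Phi,c)$ strictly and Corollary~\ref{cor.finite} rules out a maximal element. You instead assume a maximal element and extract a chain of structural consequences: first $f^{n_i-1}(c)=x_1$ for every $i$, hence $x_1$ is periodic, hence the recurrent pre-periodic point $c$ lies on that (therefore super-attracting) cycle, and finally $x$ is forced to be the invariant lift --- the last step relying on the fact that in a super-attracting basin the only immediate-basin $f$-preimage of a cycle point is the preceding cycle point (a consequence of the B\"ottcher model), so once $x_n$ leaves the cycle the level $V_n$ sits in a strictly pre-periodic Fatou component and cannot contain $c$. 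One cosmetic point: Theorem~\ref{Mane.LM} only asserts that $c$ is a \emph{limit point}, not a full limit, of $\{f^{n_j-n_{i_0}}(c)\}$; but since your Step~3 only needs infinitely many $n_j\in F_W$, passing to a subsequence suffices and the argument is unaffected. Your route is longer but fully self-contained --- it makes explicit exactly where the hypothesis ``not an invariant lift'' enters --- whereas the paper's shorter argument leaves the justification of $x_{n_1}\neq c$ tacit.
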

\begin{proof} By Theorem \ref{Mane.LM}, there exists a critical point $c$, such that for any open neighborhood $\Phi=(\Phi_1,\Phi_2,...)\subset S_{\infty}$ of $x$ there exist infinitely many positive numbers $n_1<n_2<...$, so that $\Phi$ contains $c$ in all of its levels $\Phi_{n_i}$, and for every $i=1,2,...$, $c$ is a limit point of the set $\{f^{n_j-n_i}(c)\;\; |\;\; j=i+1,i+2,...\}$. This $c$, since it is a limit point of the set $\{f^{n_j-n_1}(c)\;\; |\;\; j=2,3,...\}$, is recurrent. For any neighborhood $\Phi$ of $x$ and infinitely many positive numbers $n_1<n_2<...$, as above, take neighborhood $\Psi\subset \Phi$ of $x$ to be such, that its closure $\bar{\Psi}_{n_1}$ does not contain $c$. This is always possible, since $x_{n_1}\ne c$ and $S_{n_i}$ is a regular space. Let $(n_{j_1},n_{j_2},n_{j_3},...)$ be a subsequence of $(n_1,n_2,n_3,...)$ such that the sequence $(f^{n_{j_1}-n_1}(c),f^{n_{j_2}-n_1}(c),f^{n_{j_3}-n_1}(c),...)$ converges to $c$. Then almost all, except a finite number of, the levels $\Psi_{n_{j_1}},\Psi_{n_{j_2}},\Psi_{n_{j_3}},...$ will not contain $c$. Thus, $ind(\Psi,c)<ind(\Phi,c)$, which, by Corollary \ref{cor.finite}, implies that $sign(x,c)$ does not have a maximal element.
\end{proof}
Finally, we address the question of existence of isolated irregular points in plaque inverse limits. By Corollary \ref{nomax.main.cor.1}, plaque inverse limit at such points would be locally connected but not locally path-connected.
\\ \\
It is know from Theorems 4.1 and 4.2 of \cite{CK} that the invariant lift $x$ of a Cremer cycle is not an isolated irregular point. Indeed, for any Cremer fixed point $x_0$ of $f$ we have some open neighborhood $\Omega_0$ of $x_0$ in which $f$ is univalent and in which there exists a univalent branch $g$ of $f^{-1}$ with $g(x_0)=x_0$. Hence, by Theorem 4.1 of \cite{CK}, for any open neighborhood $U_0$ of $x_0$, with $\bar{U}_0\subset \Omega_0$, there exists a compact, connected set $H\subset \bar{U}_0$, containing the Cremer point $x_0$ and one or more points from the boundary of $U_0$, which is a full continuum in a Julia set of $f$ and satisfies $f(H)=H$ and $g(H)=H$. Such $H$ is called a hedgehog of $x_0$. Theorem 4.2 of \cite{CK} asserts that the invariant lift of $H$ to $S_{\infty}$ consists only of irregular points. Thus, any open neighborhood $U$ of $x$, with $\bar{U}_0\subset \Omega_0$, will contain an uncountable number of irregular points, different from $x$.
\\ \\
All the points of the invariant lifts of the boundaries of Siegel disks and Herman rings are irregular (see \cite{LM} and \cite{CCG}). Thus, these invariant lifts do not contain isolated irregular points.
\\ \\
Finally, we consider the invariant lift $x$ of the parabolic fixed point $x_0=0$ of a polynomial function $f(z)$ with a non-pre-period, thus recurrent, critical point $c$ on the boundary $\delta B$ of a connected component $B$ the immediate basin of attraction of $x_0=0$. Assume that $f(B)=B$. The forward orbit of $c$ is dense in $\delta B$. Consider any neighborhood $U_1$ of $x_1=x_0$. After some finite number of pull-backs of $U_1$ along $x$, all the further pullbacks will contain all the critical points inside $B$. Thus, we can assume that $U_1$ already contains all these critical points. Consider any point $y_1\in U_1$ and take any sequence $(V_1(1),V_1(2),...)$ of open neighborhood of $y_1$ converging to $y_1$. We can select some pre-images $y_2$ of $y_1$, $y_3$ of $y_2$, ..., $y_{q_1}$ of $y_{q_1}-1$ in $\delta B$ so, that the pullback $V_{q_1}(1)$ of $V_1(1)$ along $y_{q_1}$ contains $c$. Now, we can select some pre-images $y_{q_1+1}$ of $y_{q_1}$, ..., $y_{q_2}$ of $y_{q_2}-1$ in $\delta B$ so, that the pullback $V_{q_2}(2)$ of $V_1(2)$ along $y_{q_2}$ contains $c$. Continuing this way, we construct an irregular point $y=(y_1,...)$, which is contained in the lift of $U_1$ along $x$. Thus, $x$ is not an isolated irregular point.
\\ \\
Currently, we do not know any examples of an isolated irregular point, which has a signature with no maximal element.

 \end{document}